\theoremstyle{definition}
 \newtheorem{definition}{Definition}[section]
\theoremstyle{plain}
\theoremstyle{plain}
 \newtheorem{theorem}[definition]{Theorem}
\theoremstyle{definition}
\theoremstyle{plain}
 \newtheorem{lemma}[definition]{Lemma}
\theoremstyle{plain}
 \newtheorem{corollary}[definition]{Corollary}
\theoremstyle{remark}
 \newtheorem{remark}[definition]{Remark}
\theoremstyle{definition}
\theoremstyle{plain}
\newcommand{\Hom}{\mathrm{Hom}}
\newcommand{\Z}{\mathbb{Z}}
\newcommand{\A}{\Lambda}
\renewcommand{\k}{\Bbbk}
\renewcommand{\1}{\mathbbm{1}}
\newcommand{\M}{\widehat{M}}
\newcommand{\N}{\widehat{N}}
\newcommand{\Y}{\widehat{Y}}
\renewcommand{\P}{\widehat{P}}
\newcommand{\Ar}{\widehat{\Lambda}}
\newcommand{\h}{\widehat{h}}
\title[On irreducible morphisms and Auslander-Reiten triangles]{On irreducible morphisms and Auslander-Reiten triangles in the stable category of modules over repetitive algebras} 
\thanks{This research was partly supported by the Centro de Ciencias de Matem‡ticas, UNAM, Unidad Morelia, (Mexico), CODI and Estrategia de Sostenibilidad 2019-2020 (Universidad de Antioquia, UdeA), and COLCIENCIAS (CONVOCATORIA DOCTORADOS NACIONALES No. 727 DE 2015).}
\author[Calder\'on-Henao]{Yohny Calder\'on-Henao}
\address{Instituto de Matem\'aticas, Universidad de Antioquia, Medell{\'\i}n, Antioquia, Colombia}
\email{yohny.calderon@udea.edu.co}
\author[Giraldo]{Hern\'an Giraldo}
\address{Instituto de Matem\'aticas, Universidad de Antioquia, Medell{\'\i}n, Antioquia, Colombia}
\email{hernan.giraldo@udea.edu.co}
\author[V\'elez-Marulanda]{Jos\'e A. V\'elez-Marulanda}
\address{Department of Mathematics, Valdosta State University,
2072 Nevins Hall, 1500 N. Patterson St, Valdosta, GA,  31698-0040}
\email{javelezmarulanda@valdosta.edu (Corresponding author)}
\begin{document}
\renewcommand{\labelenumi}{\textup{(\roman{enumi})}}
\renewcommand{\labelenumii}{\textup{(\roman{enumi}.\alph{enumii})}}
\numberwithin{equation}{section}

\begin{abstract}

Let $\k$ be an algebraically closed field, let $\A$ be a finite dimensional $\k$-algebra, and let $\Ar$ be the repetitive algebra of $\A$.  For the stable category of finitely generated left $\Ar$-modules $\Ar$-\underline{mod}, we show that the irreducible morphisms fall into three canonical forms: (i) all the component morphisms are split monomorphisms; (ii) all of them are split epimorphisms; (iii) there is exactly one irreducible component. We next use this fact in order to describe the shape of the Auslander-Reiten triangles in $\Ar$-\underline{mod}. We use the fact (and prove) that every Auslander-Reiten triangle in $\Ar$-\underline{mod} is induced from an Auslander-Reiten sequence of finitely generated left $\Ar$-modules. 
\end{abstract}
\subjclass[2010]{16G10 \and 16G20 \and 20C20}
\maketitle
\section{Introduction}
Let $\k$ be an algebraically closed field and $\A$ be a finite dimensional $\k$-algebra. Denote by $\A$-mod the abelian category of finitely generated left $\A$-modules and by $\mathcal{D}^b(\A\textup{-mod})$ the bounded derived category of $\A$, which is a triangulated category. Let $\Ar$ be the repetitive algebra of $\A$ (see \S \ref{repetitivealgebra}) and denote by $\Ar$-mod the abelian category of finitely generated left $\Ar$-modules and by $\Ar$-\underline{mod} its stable category. It follows from \cite[Chap. II, \S 2.2]{happel} that $\Ar$-\underline{mod} is a triangulated category. Due to a fundamental result due to D. Happel, there exists a full and faithful exact functor of triangulated categories  $\mu: \mathcal{D}^b(\A\textup{-mod})\to \Ar\textup{-\underline{mod}}$, such that $\mu$ extends the identity functor on $\A$-mod, where $\A$-mod is embedded in $\mathcal{D}^b(\A\textup{-mod})$ (resp. $\Ar$-\underline{mod}) as complexes (resp. modules) concentrated in degree zero. Moreover, $\mu$ is an equivalence if and only if $\A$ has finite global dimension (see \cite[\S 2.3]{happel2}). This result allows us to describe the indecomposable objects and the corresponding morphisms in $\mathcal{D}^b(\A\textup{-mod})$ via $\Ar$. However, it is also well-known that the description of this functor $\mu$ is rather difficult to visualize (see \cite{barot-mendoza}). In \cite{happkellrei}, D. Happel et al. investigated the relationship between $\mathcal{D}^b(\A\textup{-mod})$ and $\Ar$-\underline{mod} from various points of view for algebras of infinite global dimension.  One the other hand, it was proved by D. Hughes and  J. Waschb\"usch in \cite[\S 2.5]{hughes-wash} that $\Ar$-mod has Auslander-Reiten sequences and by D. Happel in \cite{happel} that $\Ar$-\underline{mod} has Auslander-Reiten triangles. Therefore, it is a natural task to investigate the behavior of Auslander-Reiten sequences and triangles involving left $\Ar$-modules, which in turn rises the task of investigating the behavior of irreducible morphisms between left $\Ar$-modules. Such task was approached in the more general setting of additive categories by M. J. Souto Salorio and R. Bautista in \cite[\S 2]{bautista1}. More recently, the second author investigated in \cite{giraldo} the behavior of irreducible morphisms between objects in $\Ar$-mod. This work was inspired by the second author's joint work with H. Merklen on the behavior of irreducible morphisms between objects in $\mathcal{D}^b(\A\textup{-mod})$ (see \cite{giraldo-merklen}). This approach has been used recently by the second author together with E. Ribeiro Alvares and S. M. Fernandes in \cite{ribeiro-fernandes-giraldo} to investigate the shape of Auslander-Reiten triangles in $\mathcal{D}^b(\A\textup{-mod})$. The goal of this article is to investigate the shape of Auslander-Reiten triangles in $\Ar$-\underline{mod} and thus recover the results in \cite{ribeiro-fernandes-giraldo}. Our main motivation is that the objects and the morphisms in $\Ar$-mod are easier to understand than those in 
$\mathcal{D}^b(\A\textup{-mod})$. Our main results are Theorem \ref{thm1}, which extends \cite[Thm. 26]{giraldo} to $\Ar$-\underline{mod} and Theorem \ref{thm1}, which provides a generalization of \cite[\S 4.1]{ribeiro-fernandes-giraldo}. 

In Theorem \ref{thm1}, we prove that an irreducible morphism in $\Ar$-\underline{mod} falls into three possible canonical forms: (i) all the component morphisms are split monomorphisms; (ii) all of them are split epimorphisms; (iii) there is exactly one irreducible component. On the other hand, in Theorem \ref{thm2} we describe the shape of the irreducible morphisms involved in Auslander-Reiten triangles with terms in $\Ar$-\underline{mod}. In order to achive our goals, we use the fact that every Auslander-Reiten triangle in $\Ar$-\underline{mod} is induced by an Auslander-Reiten sequence in $\Ar$-mod (see Theorem \ref{thm0}). 

This article is organized as follows. In \S \ref{sec2}, we review the definition of $\Ar$ and some basic aspects concerning the categories $\Ar$-mod and $\Ar$-\underline{mod}. We also review the results from \cite{giraldo} concerning irreducible morphisms in $\Ar$-mod as well as the definition of Auslander-Reiten sequences in $\Ar$-mod and Auslander-Reiten triangles in $\Ar$-\underline{mod}. We also prove Theorem \ref{thm0} and some useful corollaries.  In \S \ref{sec3}, we prove Theorem \ref{thm2}. Finally, in \S \ref{sec4}, we provide an example of a finite dimensional gentle $\k$-algebra $\A$ of infinite global dimension that verifies the results in Theorem \ref{thm2}. 

This article constitutes the doctoral dissertation of the first author under the supervision of the other two.

\section{Preliminary results}\label{sec2}
Throughout this article, we assume that $\k$ is a fixed algebraically closed field of arbitrary characteristic. If $f:A\to B$ and $g:B\to C$ are morphisms in a given category $\mathcal{C}$, then the composition of $f$ with $g$ is denoted by $g\circ f$, i.e., the usual composition of morphisms. All modules considered in this article are assumed to be finitely generated, and unless explicitly stated, they will be from the left side. Let $\A$ be a fixed finite dimensional basic $\k$-algebra.  We denote by $\A$-mod the abelian category of finitely generated $\A$-modules, and by $D(-)=\Hom_\k(-,\k)$ the standard $\k$-duality on $\A$-mod. In particular $Q=D\A$ is the minimal injective cogenerator of $\A$-mod. Note that $Q$ is also a finitely generated $\A$-$\A$-bimodule in the following way. For all $\varphi \in Q$, and $a', a''\in \A$, $a\varphi a'$ is the $\k$-linear morphism that sends each $a\in\A$ to $\varphi(a'aa'')$.

\subsection{Repetitive algebras}\label{repetitivealgebra}

Although the repetitive algebra of a finite dimensional $\k$-algebra was originally introduced by  D. Hughes and J. Waschb\"usch in \cite{hughes-wash}, in this article, we follow the notation in \cite[Chap. II, \S 2]{happel}.

In the following, we recall that definition of the {\it repetitive algebra} $\Ar$ of $\A$.
\begin{enumerate}
\item The underlying $\k$-vector space of $\Ar$  is given by $\Ar=\left(\bigoplus_{i\in \Z} \A\right)\oplus \left(\bigoplus_{i\in \Z} Q\right)$.
\item The elements of $\Ar$ are denoted by $(a_i, \varphi_i)_i$, where $a_i\in \A$, $\varphi_i\in Q$ and almost all of the $a_i, \varphi_i$ are zero.   
\item The product of two elements $(a_i, \varphi_i)_i$ and $(b_i,\psi_i)_i$ in $\Ar$ is defined as 
\[(a_i,\varphi_i)_i\cdot (b_i,\psi_i)_i= (a_ib_i, a_{i+1}\psi_i+\varphi_ib_i)_i.\]
\end{enumerate}
We refer the reader to \cite[Chap. II, \S 2]{happel} for an interpretation of $\Ar$ as a doubly infinite matrix $\k$-algebra. 
The finitely generated $\Ar$-modules are of the form $\M=(M_i,f_i)_{i\in \Z}$, where for each $i\in \Z$, $M_i$ is a finitely generated $\A$-module, all but finitely many being zero, and $f_i$ is the morphism of $\A$-modules $f_i:Q\otimes_\A M_i\to M_{i+1}$ such that $f_{i+1}\circ(\mathrm{id}_Q\otimes f_i)=0$, where $\mathrm{id}_Q$ denotes the identity morphism on $Q$.   We can visualize a finitely generated $\Ar$-module $\M$ as follows:
\begin{equation}\label{tensorcomplx}
\xymatrix@=20pt{
\cdots\ar@{~>}[r]&M_{i-2}\ar@{~>}[r]^{f_{i-2}}&M_{i-1}\ar@{~>}[r]^{f_{i-1}}&M_i\ar@{~>}[r]^{f_i}&M_{i+1}\ar@{~>}[r]^{f_{i+1}}&M_{i+2}\ar@{~>}[r]&\cdots
}
\end{equation} 

A morphism $\h:\M\to \M'$ of finitely generated $\Ar$-modules is a sequence $\h=(h_i)_{i\in \Z}$ of morphisms of $\A$-modules $h_i: M_i\to M_i'$ such that, for all $i\in \Z$, $h_{i+1}\circ f_i=f_i'\circ (\mathrm{id}_Q\otimes h_i)$.
We denote by $\Ar$-mod the abelian category of finitely generated $\Ar$-modules. By using the description of objects in $\Ar$-mod given in (\ref{tensorcomplx}), we can also visualize morphisms $\h:\M\to \M'$ in $\Ar$-mod as follows.

\begin{equation}\label{tensorcomplxmorph}
\xymatrix@=20pt{
\M\ar[d]_{\h}:&\cdots\ar@{~>}[r]&M_{i-2}\ar@{~>}[r]^{f_{i-2}}\ar[d]^{h_{i-2}}&M_{i-1}\ar@{~>}[r]^{f_{i-1}}\ar[d]^{h_{i-1}}&M_i\ar@{~>}[r]^{f_i}\ar[d]^{h_i}&M_{i+1}\ar@{~>}[r]^{f_{i+1}}\ar[d]^{h_{i+1}}&M_{i+2}\ar@{~>}[r]\ar[d]^{h_{i+2}}&\cdots\\
\M':&\cdots\ar@{~>}[r]&M'_{i-2}\ar@{~>}[r]^{f'_{i-2}}&M'_{i-1}\ar@{~>}[r]^{f'_{i-1}}&M'_i\ar@{~>}[r]^{f'_i}&M'_{i+1}\ar@{~>}[r]^{f'_{i+1}}&M'_{i+2}\ar@{~>}[r]&\cdots
}
\end{equation} 

In the situation of (\ref{tensorcomplxmorph}), we call each $h_i$ the {\it $i$-th component morphism} of $\h$. 

\begin{remark}\label{rem1.1}
Let $T\A$ be the {\it trivial extension} of $\A$, i.e., $T\A=\A\oplus Q$ as $\k$-vector spaces and the product is defined as 
\begin{equation*}
(a,\varphi)\cdot (b, \psi)=(ab,a\psi+\varphi b), 
\end{equation*} 
for all $a,b\in \A$ and $\varphi,\psi\in Q$.

It was noted in \cite[Chap. II, \S2.4]{happel} that $T\A$ is a $\Z$-graded algebra, where the elements of $\A\oplus Q$ are the elements of degree $0$ and those in $0\oplus Q$ are the elements of degree $1$.  Moreover, if $T\A^{\Z}$-mod is the category of finitely generated $\Z$-graded $T\A$-modules with morphisms of degree $0$, then the categories $\Ar$-mod and $T\A^{\Z}$-mod are equivalent. Moreover, by the remarks in \cite[Chap. VI, \S 2]{maclane}, it follows that if $\M=(M_i,f_i)_{i\in \Z}$, $\M'=(M'_i,f'_i)_{i\in \Z}$ and $\M''=(M''_i,f''_i)_{i\in \Z}$ are $\Ar$-modules, then 
$0\to \M\xrightarrow{\h} \M'\xrightarrow{\h'} \M''\to 0$ is exact in $\Ar$-mod if and only if $0\to M_i\xrightarrow{h_i}M'_i\xrightarrow{h'_i}M''_i\to 0$ is exact in $\A$-mod for all $i\in \Z$.
\end{remark}

It follows from \cite[Chap II, \S 2.2]{happel} that $\Ar$-mod is a {\it Frobenius category} in the sense of \cite[Chap. I, \S 2]{happel}, i.e., $\Ar$-mod is an exact category in the sense of \cite{quillen} which has enough projective as well as injective objects, and these classes of objects coincide. As a matter of fact, the indecomposable projective-injective $\Ar$-modules are given by
\begin{equation}\label{projindec}
\xymatrix@=20pt{
\cdots\ar@{~>}[r]&0\ar@{~>}[r]&\Hom_\A(Q,I)\ar@{~>}[r]^{\hspace*{25pt}\varphi_I}&I\ar@{~>}[r]&0\ar@{~>}[r]&\cdots,
}
\end{equation}
where $I$ is an indecomposable injective $\A$-module and $\varphi_I$ is the isomorphism of $\A$-modules $Q\otimes_\A\Hom_\A(Q,I)\to I$. We denote by $\Ar$-\underline{mod} the stable category of $\Ar$-mod, i.e., the objects of $\Ar$-\underline{mod} are the same as those in $\Ar$-mod, and two morphisms $\underline{\h}, \underline{\h}': \M\to \M'$ in $\Ar$-\underline{mod} are identified provided that $\h-\h'$ factors through a finitely generated projective-injective $\Ar$-module. For all objects $\M$ in $\Ar$-mod, we denote by $\Omega^{-1}\M$ the first cozyzygy of $\M$, i.e., $\Omega^{-1}\M$ is the cokernel of an injective $\Ar$-module hull $\M\to I(\M)$, which is unique up to isomorphism. It follows from \cite[Chap. I, \S 2.2]{happel} that $\Omega^{-1}$ induces an automorphism of $\Ar$-\underline{mod}. Moreover, it follows from \cite[Chap. I, \S 2.6]{happel} that $\Ar$-\underline{mod} is a triangulated category in the sense of \cite{verdier} whose translation functor is $\Omega^{-1}$. More precisely, a distinguished triangle 

\begin{equation}\label{triangle}
\M\xrightarrow{\underline{\h}}\M'\xrightarrow{\underline{\h}'}\M''\xrightarrow{\underline{\h}''} \Omega^{-1}\M
\end{equation}
in $\Ar$-\underline{mod} comes from a pushout diagram of $\Ar$-modules
\begin{equation*}
\xymatrix@=20pt{
0\ar[r]&\M\ar[r]\ar[d]_{\h}&I(\M)\ar[r]\ar[d]&\Omega^{-1}\M\ar[r]\ar[d]&0\\
0\ar[r]&\M'\ar[r]&\M''\ar[r]&\Omega^{-1}\M\ar[r]&0
},
\end{equation*}
where $\underline{\h}$ is the stable class of $\h$ in $\Ar$-\underline{mod}. Moreover, every short exact sequence in $\Ar$-mod induces a distinguished triangle in $\Ar$-\underline{mod} (see also \cite[Lemma 1.2]{chenzhang} for more details). 
Let $\mathcal{D}^b(\A\textup{-mod})$ be the bounded derived category of $\A$, which is also a triangulated category (see \cite{verdier}). It follows from a fundamental result due to D. Happel (see \cite[\S 2.3]{happel2}) that $\mathcal{D}^b(\A\textup{-mod})$ and $\Ar$-\underline{mod} are equivalent as triangulated categories if and only if  $\A$ has finite global dimension. 

\subsection{Irreducible morphisms in $\Ar$-mod}

Let $\mathcal{A}$ be an arbitrary additive category.  Recall that a morphism $f:X\to Y$ in $\mathcal{A}$ a said to be a {\it split monomorphism} (resp. {\it split epimorphism}) provided that there exists $g:Y\to X$ in $\mathcal{A}$ such that  $g\circ f=\mathrm{id}_X$ (resp. $f\circ g=\mathrm{id}_Y$). Throughout the remainder of this article, we refer to a split monomorphism (resp. split epimorphism) as a split mono (resp. split epi).  Recall also that $f$ is said to be {\it irreducible} if $f$ is neither a split mono nor a split epi and if $f=v\circ u$ for some morphisms $u:X\to Z$ and $v:Z\to Y$ in $\mathcal{A}$, then either $u$ is a split mono or $v$ is a split epi.  
\begin{remark}
If $\M$ and $\M'$ are indecomposable $\Ar$-modules, then $\h: \M\to \M'$ is irreducible in $\Ar$-mod if and only if $\h\in \mathrm{rad}_{\Ar}(\M,\M')/\mathrm{rad}_{\Ar}^2(\M,\M')$, where $\mathrm{rad}_{\Ar}(\M,\M')$ is the $\k$-vector space of morphisms from $\M$ to $\M'$ that are not invertible and $\mathrm{rad}_{\Ar}^2(\M,\M')$ is the $\k$-vector space of morphisms of the form $\h'\circ \h''$ such that $\h'':\M\to\M''\in \mathrm{rad}_{\Ar}(\M,\M'')$ and $\h':\M''\to\M'\in \mathrm{rad}_{\Ar}(\M'',\M')$ for some $\Ar$-module $\M''$. In this situation, we put 
\[\mathrm{Irr}(\M,\M')= \mathrm{rad}_{\Ar}(\M,\M')/\mathrm{rad}_{\Ar}^2(\M,\M').\]
Note in particular that $\mathrm{Irr}(\M,\M')$ is also a $\k$-vector space. On the other hand, if $\M=\bigoplus_{j=1}^n\M_j$, and $\M'=\bigoplus_{k=1}^m\M'_k$ are direct sums of indecomposable $\Ar$-modules, then $\mathrm{rad}_{\Ar}(\M,\M')$ is the $\k$-vector space of morphisms $\M\to \M'$ such that no component morphism $\M_j\to \M'_k$ is an isomorphism. For more details, we refer the reader to \cite[\S 2.2]{ringel2} and \cite[App. A.3, \S 3.4]{assem3}.  

\end{remark}

\begin{lemma}\label{lemma1.2}
Assume that $\h:\M\to\M'$ is an irreducible morphism in  $\Ar$-\textup{mod}.
\begin{enumerate}
\item If $\M$ is indecomposable and $\M'=\bigoplus_{k=1}^m\M'_k$ is a direct sum of indecomposable $\Ar$-modules, then for all $1\leq k\leq m$, the induced morphism $\h_k:\M\to \M'_k$ of $\Ar$-modules is irreducible.
\item If $\M=\bigoplus_{i=1}^n\M_i$ is a direct sum of indecomposable $\Ar$-modules and $\M'$ is indecomposable, then for all $1\leq i\leq m$, the induced morphism $\h_i:\M_i\to \M'$ of $\Ar$-modules is irreducible.
\end{enumerate}
\end{lemma}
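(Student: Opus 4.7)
The plan is to verify each of the three defining conditions of an irreducible morphism for each component map $\h_k:\M\to\M'_k$, exploiting the indecomposability of $\M$ and $\M'_k$ together with the local structure of their endomorphism rings. I will sketch part (i); part (ii) will follow by a dual argument after writing $\M=\M_i\oplus L$ with $L=\bigoplus_{j\neq i}\M_j$ and appealing instead to the locality of $\End_{\Ar}(\M')$.

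First, I would rule out $\h_k$ being a split mono or a split epi. If $\h_k$ were a split mono, its image would be a direct summand of the indecomposable $\Ar$-module $\M'_k$, forcing $\h_k$ to be an isomorphism; composing $\h$ with the morphism $\M'\to\M$ that is $\h_k^{-1}$ on the summand $\M'_k$ and zero on the remaining summands would then produce $\mathrm{id}_\M$, contradicting irreducibility of $\h$. A symmetric argument using the indecomposability of $\M$ rules out $\h_k$ being a split epi.

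Next, for the factorization condition, I would assume $\h_k=v\circ u$ with $u:\M\to Z$ and $v:Z\to\M'_k$, set $N=\bigoplus_{j\neq k}\M'_j$, and let $\h_N:\M\to N$ collect the remaining components of $\h$. Under the decomposition $\M'=\M'_k\oplus N$, one has
\[
\h \;=\;
\begin{pmatrix} v & 0 \\ 0 & \mathrm{id}_N \end{pmatrix}
\circ
\begin{pmatrix} u \\ \h_N \end{pmatrix},
\]
so the irreducibility of $\h$ forces either the right-hand factor to be a split mono or the left-hand factor to be a split epi. In the latter case, a block-matrix computation of a section of the left-hand factor immediately produces a right inverse for $v$. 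In the former case, a retraction $(r_Z,r_N):Z\oplus N\to\M$ gives $r_Z\circ u+r_N\circ\h_N=\mathrm{id}_\M$; since $\End_{\Ar}(\M)$ is local, one of the two summands must be invertible. If $r_N\circ\h_N$ were invertible with inverse $\alpha^{-1}$, then $(0,\alpha^{-1}\circ r_N):\M'_k\oplus N\to\M$ would be a retraction of $\h$, making $\h$ itself a split mono and contradicting irreducibility. Hence $r_Z\circ u$ is invertible, so $u$ is a split mono.

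The main obstacle will be precisely this dichotomy on the source side: one has to use the locality of $\End_{\Ar}(\M)$ to route the splitting of $(u,\h_N)^T$ to the correct summand, and then verify that the alternative outcome really does collapse $\h$ itself into a split mono. Once this routing is in place, part (ii) follows from the same template with the roles of source and target, and accordingly of split mono and split epi, interchanged.
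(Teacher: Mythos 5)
Your argument is correct, but it takes a different route from the paper's. The paper's proof is essentially a one-line reduction: it observes that $\h_k=\widehat{\pi}_k\circ\h$ with $\widehat{\pi}_k:\M'\to\M'_k$ the canonical split epi projection, and then invokes an external result (Prop.~2.18 of Souto Salorio--Bautista) asserting that such a composition of an irreducible morphism with a split epimorphism is again irreducible; the claim that no $\h_k$ is split mono or split epi is asserted without elaboration. What you have done is prove that cited proposition from scratch in the case at hand: the block factorization
$\h=\left(\begin{smallmatrix} v & 0 \\ 0 & \mathrm{id}_N \end{smallmatrix}\right)\circ\left(\begin{smallmatrix} u \\ \h_N \end{smallmatrix}\right)$
together with the locality of $\End_{\Ar}(\M)$ (available because $\M$ is indecomposable of finite length) is exactly the right mechanism for routing the splitting to $u$ rather than to $\h_N$, and your observation that the alternative outcome forces $\h$ itself to be a split mono correctly closes that branch. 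Your preliminary step is also sound and is in fact slightly more careful than the paper's: since $\M$ and $\M'_k$ are both indecomposable, a split mono or split epi $\h_k$ would be an isomorphism, and then $\h_k^{-1}\circ\widehat{\pi}_k$ retracts $\h$, contradicting irreducibility. (One small caveat on your phrasing: in the split-epi case the contradiction you reach is again that $\h$ is a split \emph{mono}, not a split epi, so the argument is not literally ``symmetric''—but it goes through.) The trade-off is the usual one: the paper's proof is shorter and leans on the general machinery of irreducible morphisms in additive categories, while yours is self-contained and makes visible exactly where indecomposability and the local endomorphism rings enter; your dual treatment of part (ii) via $\End_{\Ar}(\M')$ likewise matches the paper's ``obtained in a similar way.''
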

\begin{proof}
It is enough to prove (i) for the statement in (ii) is obtained in a similar way. Assume then that $\h=(\h_1,\ldots,\h_m)^t: \M\to \bigoplus_{k=1}^m\M'_k$ is irreducible in $\Ar$-mod. Then it follows that for all $1\leq k\leq m$, $\h_k$ is neither a split mono nor a split epi. This in particular implies that $\h\in \mathrm{rad}_{\Ar}(\M,\M')$. Let $k\in \{1,\ldots,m\}$ be fixed but arbitrary, and consider the natural projection $\widehat{\pi}_k: \M'\to \M'_k$ which is a split epi. It follows from \cite[Prop. 2.18]{bautista1} that $\h_k=\widehat{\pi}_k\circ \h$ is also an irreducible morphism in $\Ar$-mod. This finishes the proof of Lemma \ref{lemma1.2}. 
\end{proof}

\begin{theorem}{\textup{(\cite[Prop. 41 \& Thm. 42]{giraldo})}}\label{giraldo0}
Let $\h:\M\to \M'$ be a morphism in $\Ar$-\textup{mod} such that neither $\M$ nor $\M'$ has projective direct summands, and as before, denote by $\underline{\h}$ its corresponding class in $\Ar$-\textup{\underline{mod}}. Then $\h$ is split mono (resp. split epic, resp. irreducible) if and only if $\underline{\h}$ is split mono (resp. split epic, resp. irreducible). 
\end{theorem}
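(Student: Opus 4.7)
The forward implications are essentially trivial: any left (resp.\ right) inverse of $\h$ in $\Ar$-mod descends to a left (resp.\ right) inverse of $\underline{\h}$ in $\Ar$-\underline{mod}; and if $\h$ is irreducible in $\Ar$-mod and $\underline{\h}=\underline{v}\circ\underline{u}$ in $\Ar$-\underline{mod}, we can lift to $\h=v\circ u+\beta\circ\alpha$ in $\Ar$-mod (with $\beta\circ\alpha$ factoring through a projective-injective $P$), then form the factorization
\[
\M\xrightarrow{\binom{u}{\alpha}}Z\oplus P\xrightarrow{(v,\beta)}\M'
\]
and apply irreducibility in $\Ar$-mod, noting that $P$ becomes zero in the stable category. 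So the real content is the converses.

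The pivotal step is the following radical lemma: \emph{if $\M\in\Ar$-mod has no projective-injective direct summand, then every endomorphism of $\M$ factoring through a projective-injective $\Ar$-module lies in the Jacobson radical of $\End_{\Ar}(\M)$.} First I would argue this for $\M$ indecomposable: in that case $\End_\Ar(\M)$ is local, so an endomorphism is either invertible or in the radical; but an invertible endomorphism factoring through $P$ would force $\M$ to be a direct summand of $P$, contradicting non-projectivity. For general $\M=\bigoplus_j\M_j$ (with each $\M_j$ indecomposable non-projective) I would handle diagonal and off-diagonal components separately, using that off-diagonal $\Hom$'s between distinct indecomposables sit in the radical of the matrix endomorphism ring.

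Granted this, the split mono converse is immediate: assume $\underline{g}\circ\underline{\h}=\underline{\mathrm{id}_\M}$; lift to $g\circ\h=\mathrm{id}_\M+\beta\circ\alpha$ with $\beta\circ\alpha$ through a projective-injective; the radical lemma (applied to $\M$, which by hypothesis has no projective summands) implies $\mathrm{id}_\M+\beta\circ\alpha$ is a unit in $\End_\Ar(\M)$, so $(\mathrm{id}_\M+\beta\circ\alpha)^{-1}\circ g$ is a left inverse to $\h$. The split epi converse is completely analogous, using the radical lemma on $\M'$.

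Finally, for the irreducible converse, suppose $\underline{\h}$ is irreducible and $\h=v\circ u$ in $\Ar$-mod with $u:\M\to\mathcal{K}$ and $v:\mathcal{K}\to\M'$. Since $\mathcal{K}$ may acquire projective-injective summands, I would write $\mathcal{K}=\mathcal{K}_0\oplus P$ with $\mathcal{K}_0$ having no projective summand and split $u=\binom{u_0}{\alpha}$, $v=(v_0,\beta)$; then $\underline{\h}=\underline{v_0}\circ\underline{u_0}$ in the stable category. Irreducibility of $\underline{\h}$ forces either $\underline{u_0}$ to be split mono or $\underline{v_0}$ to be split epi; by the first parts of the theorem (now applicable because $\mathcal{K}_0$ has no projective summands, as do $\M,\M'$), the same holds in $\Ar$-mod, and padding the splitting with zeros on the $P$-component shows that $u$ (resp.\ $v$) is a split mono (resp.\ split epi) in $\Ar$-mod. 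Combined with the fact that $\h$ itself is neither split mono nor split epi (via the converses just established), this yields irreducibility of $\h$. The main obstacle throughout is the careful management of projective-injective summands in intermediate modules, which is precisely where the radical lemma does the essential work.
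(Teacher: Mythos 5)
The paper does not actually prove Theorem \ref{giraldo0}: it is imported verbatim from \cite[Prop. 41 \& Thm. 42]{giraldo}, so there is no in-text proof to compare your argument against. On its own merits your proof is correct, and it is the standard argument for statements of this type. Everything does indeed hinge on your ``radical lemma'' --- that for $\M$ without projective(-injective) direct summands the ideal of endomorphisms of $\M$ factoring through a projective-injective is contained in $\rad\,\End_{\Ar}(\M)$ --- and your reduction to indecomposable summands with local endomorphism rings proves it correctly. The split mono/epi converses (invert $\mathrm{id}_{\M}+\beta\circ\alpha$ and correct the stable retraction) and both irreducibility directions (absorb, respectively strip off, a projective-injective summand of the intermediate object, and pad the resulting section or retraction by zero) are all sound. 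One point of logical bookkeeping is worth flagging, though it is not a gap: the ``forward'' direction for irreducibility is not as trivial as you suggest, because verifying that $\underline{\h}$ is not split mono or split epi when $\h$ is irreducible requires exactly the split mono/epi \emph{converses} you prove later; dually, the parenthetical in your last paragraph (``via the converses just established'') should instead invoke the trivial forward implications, since there you need ``$\h$ split mono $\Rightarrow$ $\underline{\h}$ split mono.'' Since both directions of the split mono/epi statements are established in your write-up, this only affects the order of presentation, not the validity of the proof.
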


We need the following consequence of Theorem \ref{giraldo0}. 

\begin{corollary}\label{cor0}
Let $\h,\h':\M\to \M'$ be morphisms in $\Ar$-\textup{mod} with either $\M$ or $\M'$ indecomposable and both without  projective direct summands. If $\underline{\h}=\underline{\h}'$ in $\Ar$-\underline{\textup{mod}} and $\underline{\h}$ is further irreducible, then $\h=\h'$ in $\Ar$-\textup{mod}.  
\end{corollary}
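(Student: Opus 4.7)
The plan is to transport the hypothesis to $\Ar$-mod by invoking Theorem \ref{giraldo0}, and then to exploit the defining property of irreducibility to force the difference $\h-\h'$ to vanish. Set $g := \h - \h'$; the condition $\underline{\h}=\underline{\h}'$ means $\underline{g}=0$, so $g$ factors through a projective-injective module $P$ as $g = \beta\alpha$ with $\alpha\colon \M \to P$ and $\beta\colon P \to \M'$.

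First I would apply Theorem \ref{giraldo0}: since neither $\M$ nor $\M'$ has a projective summand and $\underline{\h}$ is irreducible in $\Ar$-\underline{mod}, $\h$ is irreducible in $\Ar$-mod, and the same holds for $\h'$. A key structural observation, used throughout, is that \emph{an irreducible morphism $f\colon \M \to \M'$ in $\Ar$-mod between modules without projective summands cannot factor through any projective}: in a factorization $f = vu$ with $u\colon \M\to P'$ and $v\colon P'\to \M'$, irreducibility forces $u$ to be a split mono (so $\M$ would be a projective summand, a contradiction) or $v$ to be a split epi (so $\M'$ would be a projective summand, again a contradiction).

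To extract information from $g = \beta\alpha$, I would analyze the factorization
$$\h \;=\; \bigl(1_{\M'},\,\beta\bigr)\circ \begin{pmatrix}\h' \\ \alpha\end{pmatrix}\colon \M \longrightarrow \M'\oplus P \longrightarrow \M'.$$
Using Lemma \ref{lemma1.2} one reduces to the case where both $\M$ and $\M'$ are indecomposable. A Jacobson-radical computation then shows that $(\h',\alpha)^{t}$ cannot be a split monomorphism: any purported retraction $(r_1,r_2)\colon \M'\oplus P\to \M$ would give $r_1\h' + r_2\alpha = 1_\M$, yet both $r_1\h'$ and $r_2\alpha$ lie in $\rad(\End\M)$ (the first because $\h'$ is irreducible and hence in the radical, the second because $\alpha\in\rad(\M,P)$ since $\M$ is indecomposable non-projective), contradicting that $1_\M$ is a unit.

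The main obstacle will be upgrading these partial restrictions to the sharp equality $g = 0$. The factorization above is compatible with the irreducibility of $\h$ (because $(1_{\M'},\beta)$ is always a split epi), so irreducibility alone does not preclude $g\neq 0$. To close this gap, I would invoke the explicit two-term shape of indecomposable projective-injective $\Ar$-modules in \eqref{projindec}: since $P$ is injective, $\alpha$ factors through the injective hull $\iota\colon \M \to I(\M)$ as $\alpha = \tilde\alpha\iota$, and the resulting composition $g = \beta\tilde\alpha\iota$ is then played against the irreducibility of $\h'$ and the non-splitting of $\iota$ to force $g=0$.
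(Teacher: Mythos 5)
There is a genuine gap here, and you in fact locate it yourself. After setting $g=\h-\h'=\beta\circ\alpha$ with $\alpha\colon\M\to P$, $\beta\colon P\to\M'$ and $P$ projective-injective, everything you actually establish (both $\h$ and $\h'$ irreducible in $\Ar$-mod via Theorem \ref{giraldo0}; an irreducible morphism between modules without projective summands cannot factor through a projective; $(\h',\alpha)^{t}$ is not a split mono) is, as you concede, perfectly consistent with $g\neq 0$, because the second factor $(1_{\M'},\beta)$ of your factorization is always a split epi. The closing sentence --- that $g=\beta\tilde{\alpha}\iota$ ``is then played against the irreducibility of $\h'$ and the non-splitting of $\iota$ to force $g=0$'' --- is not an argument: the non-splitting of $\iota$ and the irreducibility of $\h'$ yield exactly the same kind of radical-membership information you already extracted, namely that $\alpha$ and $\beta$ both lie in the radical and hence $g\in\rad^{2}_{\Ar}(\M,\M')$, and nothing in the sketch converts ``lies in $\rad^{2}$'' into ``equals $0$''.

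The missing idea, which is the entire content of the paper's proof, is to pass to the components and to the $\k$-vector space $\mathrm{Irr}(\M,\M'_{k})=\rad_{\Ar}(\M,\M'_{k})/\rad^{2}_{\Ar}(\M,\M'_{k})$: decomposing $\M'=\bigoplus_{k}\M'_{k}$, Lemma \ref{lemma1.2} makes every $\h_{k},\h'_{k}$ irreducible, and if some component $\h_{k_0}-\h'_{k_0}=\widehat{v}_{k_0}\circ\widehat{u}_{k_0}$ were nonzero (with $\widehat{u}_{k_0}\colon\M\to\P_{k_0}$, $\widehat{v}_{k_0}\colon\P_{k_0}\to\M'_{k_0}$, $\P_{k_0}$ projective), the paper argues that this difference of irreducibles is again irreducible, so that either $\widehat{u}_{k_0}$ is a split mono or $\widehat{v}_{k_0}$ is a split epi --- and either alternative plants a projective direct summand in $\M$ or in $\M'$, contradicting the hypothesis. (This ``difference of irreducibles is irreducible'' step is itself the delicate point of the published argument, but it is the step that does all the work.) Your ``key structural observation'' is precisely this final contradiction mechanism, yet you only ever apply it to $\h$ and $\h'$ themselves; to finish you must apply it to the nonzero difference $g$, and for that you need an argument that $g$ (or a component of it) is irreducible, or at least does not lie in $\rad^{2}$ --- a step your proposal never supplies.
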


\begin{proof}
Assume without loss of generality that $\M$ is indecomposable and that $\M'=\bigoplus_{k=1}^m\M'_k$ is a direct sum of indecomposable $\Ar$-modules. Thus we can also assume that $\h=(\h_{k})$ and $\h'=(\h'_{k})$, where for each $1\leq k\leq m$,  $\h_{k},\, \h'_{k}: \M\to \M'_k$ is a morphism of $\Ar$-modules. Note that by Theorem \ref{giraldo0}, since $\underline{\h}=\underline{\h'}$ in $\Ar$-\underline{mod}, it follows that $\h$ and $\h'$ are both irreducible morphisms in $\Ar$-mod. Moreover, $\h-\h'=\widehat{v}\circ \widehat{u}$, where $\widehat{u}:\M\to \P$ and $\widehat{v}:\P\to \M'$, where $\P$ is a projective $\Ar$-module. Thus for each $1\leq k\leq m$, there exists an indecomposable projective $\Ar$-module $\P_{k}$ such that $\h_{k}-\h'_{k}=\widehat{v}_{k}\circ \widehat{u}_{k}$ where $\widehat{u}_{k}:\M\to \P_{k}$ and $\widehat{v}_{k}:\P_{k}\to \M'_{k}$. Assume that $\h\not=\h'$ in $\Ar$-mod. It follows that there exists $k_0\in \{1,\ldots,m\}$ such that $\h_{k_0}-\h'_{k_0}=\widehat{v}_{k_0}\circ \widehat{u}_{k_0}\not=0$. Since $\h$ and $\h'$ are both irreducible in $\Ar$-mod, it follows by Lemma \ref{lemma1.2} (i) that $\h_{k_0}$ and $\h'_{k_0}$ are irreducible, i.e. $\h_{k_0}, \h'_{k_0}\in \mathrm{Irr}(\M,\M'_{k_0})$. Since the latter is a $\k$-vector space, it follows that $\widehat{v}_{k_0}\circ \widehat{u}_{k_0}\in \mathrm{Irr}(\M,\M'_{k_0})$, i.e. $\widehat{v}_{k_0}\circ \widehat{u}_{k_0}$ is irreducible in $\Ar$-mod. Therefore, $\widehat{v}_{k_0}$ is split epic or $\widehat{u}_{k_0}$ is split mono, which in turn gives that either of the $\Ar$-modules $\M$ and $\M'_{k_0}$ is a direct summand of $\P_{k_0}$ and thus either of $\M$ or $\M'$ has projective direct summands. This contradicts our assumption, and hence $\h=\h'$ in $\Ar$-mod.

\end{proof}

Let $\h: \M\to \M'$ be a morphism in $\Ar$-mod. Following \cite[Def. 14]{giraldo}),  we say that $\h$ is {\it smonic} (resp. {\it sepic}) if in the situation of (\ref{tensorcomplxmorph}), $h_i$ is split mono (resp. split epi) for all $i\in \Z$. 


The following result classifies the shape of the irreducible morphisms in $\Ar$-mod. 

\begin{theorem}{\textup{(\cite[Thm. 26]{giraldo})}}\label{shapeirred}
Let $\h:\M\to \M'$ be an irreducible morphism in $\Ar$-\textup{mod}. Then under the situation of (\ref{tensorcomplxmorph}), $\h$ satisfies one of the following conditions.
\begin{enumerate}
\item For all $i\in \Z$, $h_i$ is a split mono, i.e., $\h$ is smonic.
\item For all $i\in \Z$, $h_i$ is a split epi, i.e., $\h$ is sepic.
\item There exists $i_0\in \Z$ such that $h_{i_0}$ is neither a split mono nor a split epi. In this situation, $i_0$ is unique and $h_{i_0}$ is an irreducible morphism in $\A$-\textup{mod}.   
\end{enumerate}
\end{theorem}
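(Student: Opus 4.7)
The plan is to derive the three-case classification by constructing factorizations $\h=\widehat{v}\circ\widehat{u}$ of $\h$ through auxiliary $\Ar$-modules $\widehat{L}$ and extracting information from the irreducibility of $\h$ at each step.

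First I establish the componentwise structure. Fix $i_0\in\Z$ and any factorization $h_{i_0}=\beta\circ\alpha$ in $\A$-mod through an $\A$-module $L$. I build an intermediate $\widehat{L}$ with $L_j=M_j$ for $j<i_0$, $L_{i_0}=L$, and $L_j=M'_j$ for $j>i_0$, with bridge structure maps $g_{i_0-1}=\alpha\circ f_{i_0-1}$ and $g_{i_0}=f'_{i_0}\circ(\mathrm{id}_Q\otimes\beta)$; the nilpotency identities follow from $\beta\circ\alpha=h_{i_0}$ together with the morphism condition for $\h$. The natural $\widehat{u}:\M\to\widehat{L}$ (identity for $j<i_0$, $\alpha$ at $i_0$, $h_j$ for $j>i_0$) and dually $\widehat{v}$ satisfy $\widehat{v}\circ\widehat{u}=\h$. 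Irreducibility of $\h$ forces $\widehat{u}$ to be split mono or $\widehat{v}$ to be split epi; the $i_0$-th coordinate yields that $\alpha$ is split mono in the first case and that $\beta$ is split epi in the second. Since the factorization $h_{i_0}=\beta\circ\alpha$ was arbitrary, by the definition of irreducibility in $\A$-mod it follows that $h_{i_0}$ is a split mono, a split epi, or irreducible in $\A$-mod.

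Next I prove uniqueness of the ``neither'' index. I use the cutoff intermediate $\widehat{L}(i_0)$ with $L(i_0)_j=M_j$ for $j\leq i_0$ and $L(i_0)_j=M'_j$ for $j>i_0$, with bridge $g(i_0)_{i_0}=h_{i_0+1}\circ f_{i_0}=f'_{i_0}\circ(\mathrm{id}_Q\otimes h_{i_0})$, and the canonical morphisms $\widehat{u}(i_0),\widehat{v}(i_0)$. Then $\widehat{u}(i_0)$ split mono forces $h_j$ to be split mono for every $j>i_0$, and dually $\widehat{v}(i_0)$ split epi forces $h_j$ to be split epi for every $j\leq i_0$. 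If $h_{i_0}$ and $h_{i_1}$ were both neither split mono nor split epi with $i_0<i_1$, then the cutoff at $i_0$ could satisfy neither alternative, contradicting irreducibility. Combined with the first step, this gives case (iii) with a unique index $i_0$ at which $h_{i_0}$ is irreducible in $\A$-mod.

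It remains to establish the dichotomy between (i) and (ii) when no component is ``neither.'' Let $E=\{i:h_i\text{ is split epi, not iso}\}$ and $S=\{i:h_i\text{ is split mono, not iso}\}$: if $E=\emptyset$ every $h_i$ is split mono (case (i)), and if $S=\emptyset$ every $h_i$ is split epi (case (ii)). Otherwise I set $j=\max E$ (well-defined because $M_l=0=M'_l$ for $|l|\gg 0$); since $j+1\notin E$, $h_{j+1}$ is an isomorphism or a split mono, hence injective, so the morphism condition for $\h$ forces $f_j|_{Q\otimes\ker h_j}=0$. Fixing a section $s_j$ of $h_j$, I build a swap intermediate $\widehat{L}$ with $L_j=M'_j$ and $L_l=M_l$ for $l\neq j$, with bridge maps $g_{j-1}=h_j\circ f_{j-1}$ and $g_j=f_j\circ(\mathrm{id}_Q\otimes s_j)$. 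The swap morphisms $\widehat{u}$ (with $u_j=h_j$ and identities elsewhere) and $\widehat{v}$ (with $v_j=\mathrm{id}$ and $v_l=h_l$ elsewhere) satisfy $\widehat{v}\circ\widehat{u}=\h$, yet $\widehat{u}$ fails to be split mono at $u_j=h_j$ and $\widehat{v}$ fails to be split epi at $v_k=h_k$ for any $k\in S$ (noting $k\neq j$), contradicting irreducibility. The hardest part will be verifying that the swap $\widehat{L}$ is a valid $\Ar$-module: both the well-definedness of $g_j$ (independence of the choice of section) and the nilpotency $g_j\circ(\mathrm{id}_Q\otimes g_{j-1})=0$ reduce to the vanishing $f_j|_{Q\otimes\ker h_j}=0$, which is precisely what the maximality of $j$ in $E$, via the injectivity of $h_{j+1}$, secures.
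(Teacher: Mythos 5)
The paper does not actually prove this statement --- it is quoted verbatim from \cite[Thm.\ 26]{giraldo} --- so there is no in-paper proof to compare against; judged on its own, your argument is correct and complete, and it follows essentially the approach one expects (and that the cited source and its antecedent for complexes use): splicing intermediate $\Ar$-modules out of $\M$ and $\M'$ and reading off the split mono/split epi alternative componentwise. All three constructions (the single-index splice through $L$, the cutoff, and the swap) do satisfy the nilpotency and morphism-compatibility conditions, with the only delicate verification being $f_j|_{Q\otimes_\A \ker h_j}=0$ for the swap module, which your maximality-of-$j$ argument via the injectivity of $h_{j+1}$ correctly secures.
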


An irreducible morphism $\h:\M\to \M'$ in $\Ar$-mod satisfying the situation in Theorem \ref{shapeirred} (iii) is said to be {\it sirreducible}.

\subsection{Auslander-Reiten sequences in $\Ar$-mod and Auslander-Reiten triangles in $\Ar$-\underline{mod}}

Following \cite[\S 2.3]{ringel2}, a short exact sequence in $\Ar$-mod

\begin{equation}\label{almostsplitseq}
0\to \M\xrightarrow{\h}\M'\xrightarrow{\h'}\M''\to 0
\end{equation}

is an {\it Auslander-Reiten sequence} if the following properties are satisfied.

\begin{enumerate}
\item[(ARS1)] The morphism $\h$ is not split mono and for all morphisms $\widehat{v}: \M\to \N$ in $\Ar$-mod that are not split mono, there exists a morphism $\widehat{u}:\M'\to \N$ in $\Ar$-mod such that $\widehat{v}=\widehat{u}\circ \h$, i.e., $\h$ is left almost split. 
\item[(ARS2)] The morphism $\h'$ is not split epic and for all morphisms $\widehat{u}': \N'\to \M''$ in $\Ar$-mod that are not split epi, there exists a morphism $\widehat{v}':\N'\to \M'$ in $\Ar$-mod such that $\widehat{u}'=\h'\circ \widehat{v}'$, i.e, $\h'$ is right almost split. 
\end{enumerate} 

Assume that (\ref{almostsplitseq}) is an Auslander-Reiten sequence in $\Ar$-mod. In this situation, we say that (\ref{almostsplitseq}) starts (resp. ends ) at $\M$ (resp. $\M''$). By \cite[\S 2.5]{hughes-wash}, it follows that $\Ar$-mod has Auslander-Reiten sequences, i.e. for all indecomposable $\Ar$-modules $\M$ (resp. $\M''$) in $\Ar$-mod, there is an Auslander-Reiten sequence that starts (resp. ends) at $\M$ (resp. $\M''$). 

Following \cite[Chap. I, \S 4]{happel}, we say that a distinguished triangle as in (\ref{triangle}) in $\Ar$-\underline{mod} is an {\it Auslander-Reiten triangle} if the following properties are satisfied.
\begin{enumerate}
\item[(ART1)] The $\Ar$-modules $\M$ and $\M''$ are indecomposable.
\item[(ART2)] The morphism $\underline{\h}'': \M''\to \Omega^{-1}\M$ is non-zero.
\item[(ART3)] If $\underline{\widehat{u}}':\N'\to \M''$ is a morphism in $\Ar$-\underline{mod} which is not split epic, then there exists $\underline{\widehat{v}}':\N'\to \M'$ such that $\underline{\widehat{u}}'=\underline{\h}'\circ \underline{\widehat{v}}'$.   
\end{enumerate}  

Assume that (\ref{triangle}) is an Auslander-Reiten triangle. It follows from \cite[\S 1.4]{happel} that $\underline{\h}$ is not a split mono and that  $\underline{\h}'$ is not a split epic. Moreover, by \cite[Chap. I, \S 4.2]{happel}, it follows that the following condition is satisfied.
\begin{enumerate}
\item[(ART3$^\ast$)] If $\underline{\widehat{v}}: \M\to \N$ is a morphism in $\Ar$-\underline{mod} which is not split mono, then there exists $\underline{\widehat{u}}:\M'\to \N$ such that $\underline{\widehat{v}}=\underline{\widehat{u}}\circ \underline{\h}$.
\end{enumerate}

As before, if (\ref{triangle}) is an Auslander-Reiten triangle, then we say that (\ref{triangle}) starts (resp. ends) at $\M$ (resp. $\M''$). It is straightforward to prove that Auslander-Reiten sequences in $\Ar$-mod induce Auslander-Reiten triangles in $\Ar$-\underline{mod}. In particular, we have that $\Ar$-\underline{mod} has Auslander-Reiten triangles, i.e., for all indecomposable objects $\M$ (resp. $\M''$) in $\Ar$-\underline{mod} there exists an Auslander-Reiten triangle that starts (resp. ends) at $\M$ (resp. $\M''$). However, it is not completely clear that every Auslander-Reiten triangle in $\Ar$-\underline{mod} is induced by an Auslander-Reiten sequence in $\Ar$-mod. We approach this situation in the following result.  
  
\begin{theorem}\label{thm0}
Assume that \textup{(\ref{triangle})} is an Auslander-Reiten triangle in $\Ar$-\textup{\underline{mod}}, such that neither $\M$ nor $\M'$ nor $\M''$ has projective direct summands in $\Ar$-\textup{mod}. Then there exists an Auslander-Reiten sequence in $\Ar$-\textup{mod}
\begin{equation}\label{ARseq}
0\to \M\xrightarrow{(\widehat{h},\widehat{\alpha})^t}\M'\oplus \P\xrightarrow{(\widehat{h}'',\widehat{\beta})} \M''\to 0,
\end{equation} 
such that $\P$ is a projective $\Ar$-module and \textup{(\ref{ARseq})} induces the Auslander-Reiten triangle \textup{(\ref{triangle})}. Moreover, if $\P\not=0$, then $\P$ is indecomposable, $\M\cong \mathrm{rad}\, \P$ and $\M'' \cong \P/\mathrm{soc}\, \P$ as $\Ar$-modules.  
\end{theorem}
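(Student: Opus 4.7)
The plan is to extract the Auslander-Reiten sequence from the given AR triangle by combining the existence of AR sequences in $\Ar$-mod (Hughes-Waschb\"usch) with the uniqueness of AR triangles in $\Ar$-\underline{mod}. Since $\M''$ is indecomposable in $\Ar$-\underline{mod} by (ART1) and has no projective summands in $\Ar$-mod by hypothesis, $\M''$ is an indecomposable non-projective object of $\Ar$-mod. By \cite[\S2.5]{hughes-wash} there exists an AR sequence $0\to \N\to \widehat{E}\to \M''\to 0$ in $\Ar$-mod, and this sequence induces a distinguished triangle $\N\to \widehat{E}\to \M''\to \Omega^{-1}\N$ in $\Ar$-\underline{mod} which is itself an AR triangle. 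By the uniqueness of AR triangles ending at $\M''$, this triangle is isomorphic to \textup{(\ref{triangle})}; in particular $\N\cong\M$ and $\widehat{E}\cong\M'$ in $\Ar$-\underline{mod}. Because $\N$ is indecomposable non-projective (as first term of an AR sequence in a Frobenius category), $\N\cong\M$ already in $\Ar$-mod; because $\M'$ has no projective summands, $\widehat{E}\cong\M'\oplus\P$ in $\Ar$-mod for some projective $\Ar$-module $\P$. Transporting along these isomorphisms, and composing further with the automorphisms on $\M$ and $\M'$ coming from the iso of triangles so that the induced triangle equals \textup{(\ref{triangle})} on the nose, produces the desired sequence \textup{(\ref{ARseq})}.

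For the second assertion assume $\P\neq 0$ and decompose $\P=\bigoplus_j \P_j$ into indecomposable projective summands. By Lemma \ref{lemma1.2} each component $\widehat{\alpha}_j:\M\to\P_j$ and $\widehat{\beta}_j:\P_j\to\M''$ is irreducible in $\Ar$-mod. Each $\P_j$ is indecomposable projective-injective (as $\Ar$-mod is Frobenius) and non-simple, for if $\P_j$ were simple then $\rad\P_j=0$ and $\widehat{\alpha}_j$ would have to be surjective, hence split epic, contradicting irreducibility. Since $\widehat{\beta}_j$ is not a split epi, $\ker\widehat{\beta}_j\neq 0$ and therefore contains the simple socle $\mathrm{soc}\,\P_j$; so $\widehat{\beta}_j$ factors as $\P_j\twoheadrightarrow \P_j/\mathrm{soc}\,\P_j\xrightarrow{\varphi_j}\M''$. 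The irreducibility of $\widehat{\beta}_j$ together with the fact that the canonical surjection is not a split epi force $\varphi_j$ to be a split monomorphism, and the indecomposability of $\M''$ then yields $\M''\cong \P_j/\mathrm{soc}\,\P_j$. Dually, $\widehat{\alpha}_j$ factors as $\M\xrightarrow{\psi_j}\rad\P_j\hookrightarrow\P_j$ with $\psi_j$ a split monomorphism, so $\rad\P_j\cong \M\oplus C_j$ as $\Ar$-modules for some $C_j$. If $C_j\neq 0$, then both $\M$ and $C_j$ are nonzero submodules of the indecomposable injective $\P_j$ and each must therefore contain the simple socle $\mathrm{soc}\,\P_j$, contradicting $\M\cap C_j=0$. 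Hence $C_j=0$ and $\M\cong\rad\P_j$.

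It follows that $\rad\P_j\cong\M$ for every $j$, and since $\P_j$ is the injective hull of its simple socle $\mathrm{soc}\,\P_j=\mathrm{soc}(\rad\P_j)\cong\mathrm{soc}\,\M$, all the $\P_j$ are isomorphic to a single indecomposable projective $\P_0$. Indecomposability of $\P$ (multiplicity one of $\P_0$) follows from the classical Auslander-Reiten multiplicity formula: this multiplicity equals $\dim_\k \mathrm{Irr}(\P_0,\M'')$, and the canonical surjection $\P_0\twoheadrightarrow \P_0/\mathrm{soc}\,\P_0\cong\M''$ generates $\mathrm{Irr}(\P_0,\M'')$ because every irreducible $\P_0\to\M''$ factors (by the argument above) as a split mono times this canonical quotient and is therefore a $\k$-scalar multiple of it modulo $\rad^2$. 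The main obstacle will be the on-the-nose matching of the induced triangle with \textup{(\ref{triangle})}, which requires careful choice of the triangle isomorphism and of the lifts from $\Ar$-\underline{mod} to $\Ar$-mod, while the remainder reduces to a disciplined application of Lemma \ref{lemma1.2} together with the socle-containment bookkeeping inside the indecomposable injective $\P_j$.
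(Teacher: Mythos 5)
Your argument is correct in substance but follows a genuinely different route from the paper's. For the first assertion, the paper starts from the Auslander--Reiten sequence \emph{starting} at $\M$ and proves the extra summand $\Y$ is projective by a source-morphism cancellation argument ($\M'\cong\M'\oplus\Y'$ forces $\Y'=0$), then invokes the octahedral-type axiom to identify the third terms; you instead start from the sequence \emph{ending} at $\M''$, use uniqueness of Auslander--Reiten triangles ending at a fixed object, and extract $\widehat{E}\cong\M'\oplus\P$ by Krull--Schmidt from the stable isomorphism $\widehat{E}\cong\M'$ --- a cleaner way to see that the extra summand is projective, at the cost of the same hand-waving as the paper about matching the induced triangle with (\ref{triangle}) ``on the nose.'' For the second assertion the divergence is larger: the paper first proves $\P$ indecomposable (via $\widehat{\alpha}$ being an essential monomorphism through the injective hull $I(\M)$) and then gets $\M\cong\rad\P$ from the right-almost-splitness of $\rad\P\hookrightarrow\P$; you first prove $\M\cong\rad\P_j$ and $\M''\cong\P_j/\mathrm{soc}\,\P_j$ for \emph{every} indecomposable summand $\P_j$ using only irreducibility plus the simple-socle/simple-top structure of an indecomposable projective-injective, and then obtain indecomposability of $\P$ from the multiplicity formula $\dim_\k\mathrm{Irr}(\P_0,\M'')=1$. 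This buys you a uniform, self-dual treatment of both isomorphisms and avoids the paper's somewhat delicate claim that $\mathrm{Im}\,\widehat{\alpha}$ decomposes along a decomposition of $\P$, but it imports the Auslander--Reiten multiplicity formula for $\Ar$-mod, which the paper never needs.

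Two local slips you should repair, neither fatal. First, ``since $\widehat{\beta}_j$ is not a split epi, $\ker\widehat{\beta}_j\neq 0$'' has the wrong justification: the correct one is that $\widehat{\beta}_j$ is not a split \emph{mono}, so it cannot be injective out of the injective module $\P_j$. Second, in the factorization $\widehat{\beta}_j=\varphi_j\circ\pi$ irreducibility forces $\varphi_j$ to be a split \emph{epimorphism} (because $\pi$ is not a split mono), not a split monomorphism as you wrote; the conclusion $\M''\cong\P_j/\mathrm{soc}\,\P_j$ still follows, but from the indecomposability of $\P_j/\mathrm{soc}\,\P_j$ (it has simple top) rather than from that of $\M''$.
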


\begin{proof}
Assume that (\ref{triangle}) is an Auslander-Reiten triangle in $\Ar$-\underline{mod} such that neither of its terms has projective direct summands in $\Ar$-mod. By \cite[Chap. I, \S 4.3]{happel}, we have that the morphisms $\underline{\h}$ and $\underline{\h}'$ are irreducible in $\Ar$-\underline{mod}. By hypothesis and Theorem \ref{giraldo0}, we obtain that $\h$ is also irreducible in $\Ar$-mod and which is not a split mono. It follows that there exists an Auslander-Reiten sequence starting at $\M$ given by
\begin{equation}\label{ARseq2}
0\to \M\xrightarrow{(\h,\widehat{\alpha})^t}\M'\oplus \Y\xrightarrow{(\h''',\widehat{\beta}')}\M'''\to 0,
\end{equation} 
which in turn induces an Auslander-Reiten triangle in $\Ar$-\underline{mod} given by
\begin{equation}\label{ARtriangle2}
\M\xrightarrow{\underline{(\h,\widehat{\alpha})}^t}\M'\oplus \Y\xrightarrow{\underline{(\h''',\widehat{\beta}')}}\M'''\xrightarrow{\underline{\widehat{w}}'}\Omega^{-1}\M.
\end{equation}
We next show that $\Y$ is a projective $\Ar$-module. Assume that $\Y=\Y'\oplus \P$, where $\Y'$ is a $\Ar$-module without projective direct summands and $\P$ is a projective $\Ar$-module. Thus we can assume further that $\widehat{\alpha}=(\widehat{\alpha}_1,\widehat{\alpha}_2)^t$, where $\widehat{\alpha}_1:\M\to \Y'$ and $\widehat{\alpha}_2:\M\to \P$ are morphisms of $\Ar$-modules. It follows by \cite[Chap. I, \S 4.5]{happel} that $\underline{(\h,\widehat{\alpha})}^t=\underline{(\h,\widehat{\alpha}_1)}^t:\M\to \M'\oplus \Y'$ is a source morphism in $\Ar$-\underline{mod}, and since $\underline{\h}$ is not split mono, there exists a morphism $\underline{\widehat{\gamma}}:\M'\oplus \Y'\to \M'$ such that $\underline{\h}=\underline{\widehat{\gamma}}\circ \underline{(\h,\widehat{\alpha}_1)}^t$. Similarly, since $\underline{\h}$ is a source morphism and $\underline{(\h,\widehat{\alpha}_1)}^t$ is not a split mono, there exists $\underline{\widehat{\psi}}: \M'\to \M'\oplus \Y'$ such that $\underline{(\h,\widehat{\alpha}_1)}^t=\underline{\widehat{\psi}}\circ \underline{\h}$. Therefore $\underline{\h}=(\underline{\widehat{\gamma}}\circ \underline{\widehat{\psi}})\circ \underline{\h}$ and $\underline{(\h,\widehat{\alpha}_1)}^t=(\underline{\widehat{\psi}}\circ \underline{\widehat{\gamma}})\circ\underline{(\h,\widehat{\alpha}_1)}^t$, which implies that $\underline{\widehat{\gamma}}\circ \underline{\widehat{\psi}}$ and $\underline{\widehat{\psi}}\circ\underline{\widehat{\gamma}}$ are both automorphisms. Hence $\M'\cong \M'\oplus \Y'$ in $\Ar$-\underline{mod}, which also implies that $\M'\cong \M'\oplus \Y'$ in $\Ar$-mod, for neither $\M'$ nor $\Y'$ has projective direct summands. This clearly implies that $\Y'=0$ and thus $\Y=\P$, which proves that $\Y$ is a projective $\Ar$-module. It follows by the axiom (T3) of triangulated categories (see e.g. \cite[Chap. I, \S 1.1]{happel}) and by e.g. \cite[Chap. I, \S 1.2]{happel} that the Auslander-Reiten triangles (\ref{triangle}) and (\ref{ARtriangle2}) are isomorphic. In particular, there is an isomorphism $\M''\cong \M'''$ in $\Ar$-\underline{mod}. Since neither $\M''$ nor $\M'''$ has projective direct summands, it follows that $\M''\cong \M'''$ are isomorphic in $\Ar$-mod, and thus we obtain that the short exact sequence (\ref{ARseq2}) induces (\ref{ARseq}). In particular, (\ref{ARseq}) is also an Auslander-Reiten sequence.

Next assume that $\P\not=0$. Since $\widehat{\alpha}:\M\to \P$ is irreducible, it follows that $\widehat{\alpha}$ is a monomorphism. Let $\widehat{\iota}_{\M}: \M\to I(\M)$ the the injective $\Ar$-hull of $\M$. Then there exists $\widehat{\delta}: I(\M)\to \P$ such that $\widehat{\alpha}=\widehat{\delta}\circ \widehat{\iota}_{\M}$. Since $\widehat{\iota}_{\M}$ is not a split mono, it follows that $\widehat{\delta}$ is a split epi. On the other hand, since $\widehat{\iota}_{\M}$ is an essential monomorphism, it follows that $\widehat{\delta}$ is also a monomorphism, which implies that $\widehat{\delta}$ is an isomorphism of $\Ar$-modules and thus $\widehat{\alpha}$ is also an essential monomorphism. Assume that $\P=\P'\oplus\P''$ with $\P'$ and $\P''$ non-zero $\Ar$-modules. Therefore, $\mathrm{Im}\, \widehat{\alpha} = (\mathrm{Im}\,\widehat{\alpha}\cap \P')\oplus (\mathrm{Im}\,\widehat{\alpha}\cap \P'')$. This shows that $\P$ is an essential extension of $\mathrm{Im} \, \widehat{\alpha}$. Therefore, $\mathrm{Im}\,\widehat{\alpha}\cap \P'\not =0$ and $\mathrm{Im}\,\widehat{\alpha}\cap \P''\not=0$. Moreover, $\M\cong \mathrm{Im}\,\widehat{\alpha}= (\mathrm{Im}\,\widehat{\alpha}\cap \P')\oplus (\mathrm{Im}\,\widehat{\alpha}\cap \P'')$, which contradict that $\M$ is indecomposable. This shows that $\P$ is indecomposable.

Now consider the natural inclusion of $\Ar$-modules  $\widehat{\iota}_{\mathrm{rad}\, \P}:\mathrm{rad}\, \P\to \P$. It follows from \cite[Example 43 (i)]{giraldo} that $\widehat{\iota}_{\mathrm{rad}\, \P}$ is irreducible, which is also right almost split. Therefore, there exists $\widehat{\lambda}: \M\to \mathrm{rad}\, \P$ such that $\widehat{\alpha}=\widehat{\iota}_{\mathrm{rad}\, \P}\circ \widehat{\lambda}$. It follows that $\widehat{\lambda}$ is split mono. Since $\mathrm{rad}\, \P$ is the unique maximal submodule of $\P$, it follows that $\widehat{\iota}_{\mathrm{rad}\, \P}$ induces a morphism of $\Ar$-modules $\widehat{\iota}_{\P}: \P/\mathrm{rad}\, \P\to \P/\mathrm{Im}\, \widehat{\lambda}$. It follows that $\widehat{\iota}_{\P}\circ \widehat{\pi}_{\P}=\widehat{\pi}_{\widehat{\alpha}}$, where $\widehat{\pi}_{\P}: \P\to \P/\mathrm{rad}\,\P$ and $\widehat{\pi}_{\widehat{\alpha}}: \P\to \P/\mathrm{Im}\,\widehat{\alpha} $ are the natural projections of $\Ar$-modules. Therefore, there exists a morphism of $\Ar$-modules $\widehat{\sigma}: \mathrm{rad}\, \P\to \M$ such that $\widehat{\iota}_{\mathrm{rad}\, \P}= \widehat{\alpha}\circ \widehat{\sigma}=\widehat{\iota}_{\mathrm{rad}\, \P}\circ(\widehat{\lambda}\circ \widehat{\sigma})$. Since $\widehat{\iota}_{\mathrm{rad}\, \P}$ is a monomorphism, it follows that $\mathrm{id}_{\mathrm{rad}\, \P}=\widehat{\lambda}\circ\widehat{\sigma}$. This implies that $\widehat{\lambda}$ is epimorphism, hence an isomorphism, i.e, $\M\cong \mathrm{rad}\, \P$ as $\Ar$-modules. The proof of the second isomorphism $\M''\cong \P/\mathrm{soc}\, \P$ is obtained by duality. This finishes the proof of Theorem \ref{thm1}.
\end{proof}
\begin{corollary}\label{cor1.8}
Consider the Auslander-Reiten sequence of $\Ar$-modules
\begin{equation}\label{ARrad}
0\to \mathrm{rad}\, \P\xrightarrow{(\h,\widehat{\iota}_{\mathrm{rad}\, \P})^t} \M'\oplus \P\xrightarrow{(\h',\widehat{\pi}_{\P})}\P/\mathrm{soc}\, \P\to 0,
\end{equation}
where $\P$ is an indecomposable projective $\Ar$-module and $\widehat{\pi}_{\P}:\P\to \P/\mathrm{soc}\, \P$ is the natural projection. Then we have the following.
\begin{enumerate}
\item The morphism $\h$ is an epimorphism, and $\h'$ is a monomorphism.
\item If $\M'=(M'_i,f'_i)_{i\in \Z}$, then there exists $i_0\in \Z$ such that $M'_i=0$ for all $i\not=i_0, i_0+1$. 
\end{enumerate}
\end{corollary}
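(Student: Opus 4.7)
The plan is to handle the two parts separately; both follow quickly from the shape of the data in (\ref{ARrad}) rather than from any deep property of Auslander-Reiten sequences, and the componentwise exactness criterion from Remark \ref{rem1.1} will do most of the work in part (ii).

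For part (i), my approach is a diagram chase on (\ref{ARrad}) that uses crucially that the $\P$-component of the first map is the natural inclusion $\widehat{\iota}_{\mathrm{rad}\, \P}$ and the $\P$-component of the second map is the canonical projection $\widehat{\pi}_{\P}$. To see that $\h$ is an epimorphism, I would take an arbitrary $m \in \M'$, lift $\h'(m) \in \P/\mathrm{soc}\, \P$ to some $p \in \P$ using surjectivity of $\widehat{\pi}_{\P}$, and note that $(m, -p)$ lies in the kernel of $(\h', \widehat{\pi}_{\P})$; exactness at the middle term of (\ref{ARrad}) then produces an $r \in \mathrm{rad}\, \P$ with $\h(r) = m$. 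To see that $\h'$ is a monomorphism, I would apply the same exactness to an $m$ with $\h'(m) = 0$: the pair $(m, 0)$ belongs to the image of the first map, so the preimage $r \in \mathrm{rad}\, \P$ satisfies $\widehat{\iota}_{\mathrm{rad}\, \P}(r) = 0$, hence $r = 0$ and $m = \h(r) = 0$.

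For part (ii), I would invoke the explicit description (\ref{projindec}) of indecomposable projective-injective $\Ar$-modules: such a $\P$ has the form $\cdots \to 0 \to \Hom_\A(Q, I) \to I \to 0 \to \cdots$ for some indecomposable injective $\A$-module $I$, so there exists $i_0 \in \Z$ with $\P_i = 0$ for all $i \notin \{i_0, i_0+1\}$. Since $\mathrm{rad}\, \P$ is a submodule and $\P/\mathrm{soc}\, \P$ is a quotient of $\P$, both vanish in every degree outside $\{i_0, i_0+1\}$. Applying Remark \ref{rem1.1} degree by degree to (\ref{ARrad}), the outer terms of the resulting short exact sequence of $\A$-modules vanish whenever $i \notin \{i_0, i_0+1\}$, forcing $M'_i = 0$ in those degrees. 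I do not anticipate any genuine obstacle; the closest thing to a subtlety is to remember that the maps in (\ref{ARrad}) have the specific canonical form that makes the diagram chase in (i) immediate and that the support of a submodule or quotient of $\P$ is confined to the support of $\P$ itself.
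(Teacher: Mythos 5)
Your proposal is correct and follows essentially the same route as the paper: part (i) is the standard bicartesian-square argument that the paper summarizes as being ``immediate'' from $\widehat{\iota}_{\mathrm{rad}\,\P}$ being mono and $\widehat{\pi}_{\P}$ being epi, and part (ii) rests, as in the paper, on the support of $\P$ being concentrated in degrees $i_0,i_0+1$ together with the componentwise exactness of Remark \ref{rem1.1}. The only (harmless) difference is that in (ii) the paper deduces $M'_i=0$ from the componentwise surjectivity of $\h$ established in (i), whereas you squeeze $M'_i$ between the vanishing outer terms, which makes (ii) independent of (i).
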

\begin{proof}
Note that the proof of (i) is immediate from the facts that $\widehat{\iota}_{\mathrm{rad}\, \P}$ is a monomorphism and $\widehat{\pi}_{\P}$ is an epimorphism. Next we prove (ii). Note that $\P$ is of the form 
\begin{equation}
\P: \xymatrix@=20pt{
\cdots\ar@{~>}[r]&0\ar@{~>}[r]&\Hom_\A(Q,I)\ar@{~>}[r]^{\hspace*{25pt}\varphi_I}&I\ar@{~>}[r]&0\ar@{~>}[r]&\cdots,
}
\end{equation}
whose non-zero terms are in degree $i_0$ and $i_0+1$ for some $i_0\in \Z$, where $I$ is an indecomposable injective $\A$-module. Then $\mathrm{rad}\, \P$ can be viewed as 
\begin{equation}\label{radproj}
\mathrm{rad}\,\P: \xymatrix@=20pt{
\cdots\ar@{~>}[r]&0\ar@{~>}[r]&\mathrm{rad}\, \Hom_\A(Q,I)\ar@{~>}[r]^{\hspace*{30pt}\overline{\varphi}_I}&I\ar@{~>}[r]&0\ar@{~>}[r]&\cdots,
}
\end{equation}
where $\overline{\varphi}_I= \varphi_I\circ (\mathrm{id}_Q\otimes \eta)$ and $\eta: \mathrm{rad}\, \Hom_\A(Q,I)\to \Hom_\A(Q,I)$ is the natural inclusion. Therefore the morphism of $\Ar$-modules $\h: \mathrm{rad}\, \P\to \M'$ can be viewed as 
\begin{equation*}
\xymatrix@=20pt{
\cdots\ar@{~>}[r]&0\ar@{~>}[r]\ar[d]^{h_{i_0-1}}&\mathrm{rad}\, \Hom_\A(Q,I)\ar@{~>}[r]^{\hspace*{30pt}\overline{\varphi}_I}\ar[d]^{h_{i_0}}&I\ar@{~>}[r]\ar[d]^{h_{i_0+1}}&0\ar@{~>}[r]\ar[d]^{h_{i_0+2}}&\cdots\\
\cdots\ar@{~>}[r]&M'_{i_0-1}\ar@{~>}[r]^{f'_{i_0-1}}&M'_k\ar@{~>}[r]^{f'_{i_0}}&M'_{i_0+1}\ar@{~>}[r]^{f'_{i_0+1}}&M'_{i_0+2}\ar@{~>}[r]&\cdots.
}
\end{equation*}
It follows from (i) that $h$ is an epimorphism, and thus each $h_i$ is also an epimorphism for all $i\in \Z$, which implies that $M'_i=0$ for all $n\not=i_0, i_0+1$. 
\end{proof}

\section{Main results}\label{sec3}

Let $\underline{\h}:\M\to \M'$ be an irreducible morphism in $\Ar$-\underline{\textup{mod}}, where either $\M$ or $\M'$ is indecomposable and both have no projective direct summands as objects in $\Ar$-mod. As before, it follows by Theorem \ref{giraldo0} that $\h:\M\to \M'$ is also irreducible.  Thus $\h$ satisfies one of the properties (i)-(iii) in Theorem \ref{shapeirred}. Moreover, if $\h': \M\to \M'$ is another morphism in $\Ar$-mod such that $\underline{\h}=\underline{\h'}$ in $\Ar$-\underline{\textup{mod}}, then by Corollary \ref{cor0}, $\h=\h'$ in $\Ar$-mod. Thus $\h$ is smonic (resp. sepic, resp. sirreducible) if and only if so is $\h'$. This argument motivates the following definition concerning irreducible morphisms in $\Ar$-\underline{\textup{mod}}.

\begin{definition}\label{defi1.6}
Let $\underline{\h}:\M\to \M'$ be an irreducible morphism in $\Ar$-\underline{\textup{mod}}. Assume further that either $\M$ or $\M'$ is an indecomposable $\Ar$-module and that both have no projective direct summands as objects in $\Ar$-\textup{mod}. We say that $\underline{\h}$ is {\it stably smonic} (resp. {\it stably sepic}, resp. {\it stably sirreducible}) provided that $\h$ is smonic (resp. sepic, resp. sirreducible) as a morphism of $\Ar$-modules. 
\end{definition}

Our first main result is a direct consequence of Theorem \ref{giraldo0}, Corollary \ref{cor0} and Definition \ref{defi1.6}. 

\begin{theorem}\label{thm1}
Let $\underline{\h}:\M\to \M'$ be an irreducible morphism in $\Ar$-\underline{\textup{mod}}. Assume further that either $\M$ or $\M'$ is an indecomposable $\Ar$-module and that both have no projective direct summands as objects in $\Ar$-mod. Then $\underline{\h}$ is either stably smonic or stably sepic or stably sirreducible.  
\end{theorem}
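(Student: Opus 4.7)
The plan is to assemble the three cited ingredients in order, since each of them handles one distinct step: transporting irreducibility from $\Ar$-\underline{mod} up to $\Ar$-mod, classifying irreducible morphisms in $\Ar$-mod, and confirming that the resulting classification is well-defined on stable equivalence classes. No new calculation is needed; the content of the theorem is really that the hypotheses on projective direct summands are precisely those required to make all three cited results apply simultaneously.

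First I would pick a representative $\h:\M\to \M'$ in $\Ar$-mod of the given stable morphism $\underline{\h}$. Because by hypothesis neither $\M$ nor $\M'$ has projective direct summands, Theorem \ref{giraldo0} applies and yields that $\h$ is itself irreducible in $\Ar$-mod. Theorem \ref{shapeirred} then immediately places $\h$ into exactly one of three canonical forms: either $\h$ is smonic, or $\h$ is sepic, or $\h$ is sirreducible (with a unique irreducible component index $i_0$). This already gives a trichotomy at the level of $\Ar$-mod.

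Next I would verify that this trichotomy descends unambiguously to $\underline{\h}$, so that the three ``stably'' notions of Definition \ref{defi1.6} are well-defined. Suppose $\h'$ is any other representative of $\underline{\h}$ in $\Ar$-mod. Since either $\M$ or $\M'$ is indecomposable and both are free of projective direct summands, Corollary \ref{cor0} applies and forces $\h=\h'$ in $\Ar$-mod. Therefore the property of being smonic, sepic or sirreducible is an invariant of the stable class, and consequently, via Definition \ref{defi1.6}, $\underline{\h}$ is exactly one of stably smonic, stably sepic, or stably sirreducible. This completes the argument.

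The step I expect to be least routine is precisely the bookkeeping of hypotheses rather than any substantive obstacle: one must check that the two hypotheses used (indecomposability of one of $\M,\M'$, and absence of projective direct summands in both) are strong enough for both Theorem \ref{giraldo0} and Corollary \ref{cor0} to be invoked on the same pair $(\M,\M')$. The indecomposability hypothesis is needed for Corollary \ref{cor0} but not Theorem \ref{giraldo0}, while the no-projective-summands condition is the common ingredient; once this alignment is noted, the proof is essentially a one-line chain of citations.
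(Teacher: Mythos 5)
Your proposal is correct and follows essentially the same route as the paper: the paper's own justification (given in the paragraph preceding Definition \ref{defi1.6}) likewise lifts a representative via Theorem \ref{giraldo0}, applies Theorem \ref{shapeirred} for the trichotomy, and uses Corollary \ref{cor0} to see the classification is independent of the chosen representative. Nothing is missing.
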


Our second main result is the following which gives an interpretation of the shape of the Auslander-Reiten triangles in $\Ar$-\underline{\textup{mod}}.

\begin{theorem}\label{thm2}
Assume that (\ref{triangle}) is an Auslander-Reiten triangle, where $\M$, $\M'$ and $\M''$ have no projective direct summands as objects in $\Ar$-\textup{mod}. Then we have the following. 
\begin{enumerate}
\item If $\underline{\h}$ is stably smonic, then $\underline{\h}'$ is stably sepic.
\item If $\underline{\h}$ is stably sepic, then $\underline{\h}'$ is stably sirreducible.
\item If $\underline{\h}$ is stably irreducible, then $\underline{\h}'$ is either stably smonic or stably sirreducible.
\end{enumerate}
\end{theorem}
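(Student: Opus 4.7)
The plan is to invoke Theorem \ref{thm0} to realize the Auslander-Reiten triangle \textup{(\ref{triangle})} as induced by an Auslander-Reiten sequence
\[
0 \to \M \xrightarrow{(\h,\widehat{\alpha})^t} \M'\oplus \P \xrightarrow{(\h'',\widehat{\beta})} \M'' \to 0
\]
in $\Ar$-mod, with $\P$ a (possibly zero) projective $\Ar$-module. By Lemma \ref{lemma1.2} the components $\h$ and $\h''$ are each irreducible in $\Ar$-mod; by Remark \ref{rem1.1}, for each $i \in \Z$ we obtain an exact sequence of $\A$-modules
\[
0 \to M_i \xrightarrow{(h_i,\alpha_i)^t} M'_i \oplus P_i \xrightarrow{(h''_i,\beta_i)} M''_i \to 0
\]
on which Theorem \ref{shapeirred} classifies the shapes of $\h$ and $\h''$. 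The argument then splits according to whether $\P = 0$ or $\P \neq 0$.

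When $\P = 0$, the level sequences simplify to $0 \to M_i \xrightarrow{h_i} M'_i \xrightarrow{h''_i} M''_i \to 0$. If $\h$ is smonic, each sequence splits and each $h''_i$ is split epi, giving (i); if $\h$ were sepic, each $h_i$ would be simultaneously mono and split epi---hence iso---forcing $\h$ to be an isomorphism and contradicting irreducibility, so (ii) is vacuous. If $\h$ is sirreducible with unique critical index $i_0$, then for $i \neq i_0$ the map $h_i$ is mono and, by Theorem \ref{shapeirred}, either split mono or split epi, hence an isomorphism, so $h''_i$ is split epi; at $i_0$ one checks directly that $h''_{i_0}$ is neither split mono (else $\mathrm{Im}\,h_{i_0}=0$ and $h_{i_0}=0$) nor split epi (else the level sequence would split and $h_{i_0}$ would be split mono), so $\h''$ is sirreducible, matching (iii).

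When $\P \neq 0$, Theorem \ref{thm0} and Corollary \ref{cor1.8} tell us that $\P$ is indecomposable with $P := \Hom_\A(Q,I)$ in degree $i_0$ and $I$ in degree $i_0+1$, that $\M \cong \mathrm{rad}\,\P$ and $\M'' \cong \P/\mathrm{soc}\,\P$, and that $\M'$ is concentrated in the same two degrees. Taking $\widehat{\alpha} = \widehat{\iota}_{\mathrm{rad}\,\P}$ and $\widehat{\beta} = \widehat{\pi}_\P$, a direct comparison of kernel and image in the short exact sequences at levels $i_0$ and $i_0+1$ forces both $h_{i_0} : \mathrm{rad}\,P \to M'_{i_0}$ and $h''_{i_0+1} : M'_{i_0+1} \to I/\mathrm{soc}\,I$ to be isomorphisms, and yields $h''_{i_0} = -\iota \circ h_{i_0}^{-1}$ and $h_{i_0+1} = -(h''_{i_0+1})^{-1} \circ \pi$, where $\iota : \mathrm{rad}\,P \hookrightarrow P$ and $\pi : I \twoheadrightarrow I/\mathrm{soc}\,I$. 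Since $P$ is an indecomposable projective and $I$ an indecomposable injective $\A$-module, each of $\iota$ and $\pi$ is either zero (exactly when $P$, respectively $I$, is simple) or strictly irreducible in $\A$-mod. The three non-degenerate sub-cases---neither $P$ nor $I$ simple, giving both $\h$ and $\h''$ sirreducible and matching (iii); $P$ simple but not $I$, giving $\h$ sirreducible and $\h''$ smonic, the remaining half of (iii); and $I$ simple but not $P$, giving $\h$ sepic and $\h''$ sirreducible, which is (ii)---exhaust the possibilities. The doubly degenerate case ($P$ and $I$ both simple) would force $\M'=0$, which is incompatible with $\underline{\h}'$ being irreducible in $\Ar$-\underline{mod} and so is excluded by hypothesis.

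I expect the main obstacle to be the bookkeeping in the $\P \neq 0$ case: one must first extract from exactness that $h_{i_0}$ and $h''_{i_0+1}$ are forced to be isomorphisms, and then identify the remaining components $h''_{i_0}$ and $h_{i_0+1}$ as compositions of an isomorphism with the canonical maps $\iota$ and $\pi$, whose irreducibility in $\A$-mod is standard for the inclusion of the radical of an indecomposable projective and the projection of an indecomposable injective onto the quotient by its simple socle; the $\P = 0$ case is comparatively routine, reducing to a level-by-level splitting argument.
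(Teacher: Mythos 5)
Your proposal is correct, and its skeleton coincides with the paper's: invoke Theorem \ref{thm0} to replace the triangle by an Auslander--Reiten sequence $0\to\M\to\M'\oplus\P\to\M''\to 0$, then split on $\P=0$ versus $\P\neq 0$ and read off shapes from the level-wise exact sequences of Remark \ref{rem1.1}. Your $\P=0$ analysis reproduces the paper's Lemma \ref{lem2.4} almost verbatim. Where you genuinely diverge is the $\P\neq 0$ case: the paper proves three separate statements (Lemma \ref{lemma2.5}(i)--(iii)), each time \emph{assuming} the shape of $\h$ and deducing that of $\h'$ by somewhat ad hoc eliminations, whereas you extract directly from exactness at levels $i_0$ and $i_0+1$ that $h_{i_0}$ and $h''_{i_0+1}$ are isomorphisms and that $h''_{i_0}=-\iota\circ h_{i_0}^{-1}$, $h_{i_0+1}=-(h''_{i_0+1})^{-1}\circ\pi$, so that the shapes of \emph{both} $\h$ and $\h''$ are determined simultaneously by whether $P=\Hom_\A(Q,I)$ and $I$ are simple. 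This is cleaner and more unified: it avoids the convoluted dichotomy in the paper's proof of Lemma \ref{lemma2.5}(iii), and it recovers for free the paper's remark that in case (ii) the module $I$ must be simple injective. Two small imprecisions to fix: in the $\P=0$ sirreducible case, ``either split mono or split epi, hence an isomorphism'' is not right (a proper split mono occurs, e.g., in the paper's Example (4)); the correct statement is that for $i\neq i_0$ either $h_i$ is split mono, so the level sequence splits, or $h_i$ is a mono and split epi, hence an isomorphism --- in both cases $h''_i$ is split epi, which is all you need. Also, Lemma \ref{lemma1.2} as stated only gives irreducibility of the components into \emph{indecomposable} summands; to conclude that $\h:\M\to\M'$ and $\h'':\M'\to\M''$ are irreducible for possibly decomposable $\M'$ you should cite the underlying fact (\cite[Prop.~2.18]{bautista1}, composition with the split projection/inclusion of a direct summand), exactly as the paper does in the proof of Lemma \ref{lemma1.2}.
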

 
We prove Theorem \ref{thm2} by proving the following lemmata and by using the fact that every Auslander-Reiten triangle in $\Ar$-\underline{mod} is induced by an Auslander-Reiten sequence in $\Ar$-mod (see Theorem \ref{thm0}). 

\begin{lemma}\label{lem2.4}
Consider the short exact sequence of $\Ar$-modules as in (\ref{almostsplitseq}). 
\begin{enumerate}
\item If $\h$ is smonic, then $\h'$ is sepic.
\item If (\ref{almostsplitseq}) is an Auslander-Reiten sequence and $\h$ is sirreducible, then $\h'$ is sirreducible.
\end{enumerate}
\end{lemma}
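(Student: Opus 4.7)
The argument splits naturally into the two cases. For (i), the plan is to reduce the problem componentwise to $\A$-mod via Remark \ref{rem1.1}: the sequence (\ref{almostsplitseq}) is exact in $\Ar$-mod precisely when, for every $i\in \Z$, the induced sequence $0\to M_i\xrightarrow{h_i}M'_i\xrightarrow{h'_i}M''_i\to 0$ is exact in $\A$-mod. If $\h$ is smonic, then every $h_i$ is split mono, so each of these component short exact sequences splits in $\A$-mod; a splitting provides a section to $h'_i$, making $h'_i$ split epi. Hence $\h'$ is sepic.

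For (ii), suppose that (\ref{almostsplitseq}) is an Auslander-Reiten sequence in $\Ar$-mod and that $\h$ is sirreducible, with distinguished index $i_0\in \Z$ coming from Theorem \ref{shapeirred}(iii); in particular, $h_{i_0}$ is irreducible in $\A$-mod. The plan has two steps. First, I will use the fact that in any Auslander-Reiten sequence both flanking morphisms are irreducible (a standard consequence of the axioms (ARS1)/(ARS2); see \cite[\S 2]{bautista1}) to conclude that $\h'$ is irreducible in $\Ar$-mod. Theorem \ref{shapeirred} then forces $\h'$ to be smonic, sepic or sirreducible. Second, I will rule out the first two possibilities by inspecting the $i_0$-component sequence $0\to M_{i_0}\xrightarrow{h_{i_0}}M'_{i_0}\xrightarrow{h'_{i_0}}M''_{i_0}\to 0$ in $\A$-mod. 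If $h'_{i_0}$ were split epi, this sequence would split and $h_{i_0}$ would become a split mono, contradicting its irreducibility. If $h'_{i_0}$ were split mono, then being also surjective it would be an isomorphism, forcing $M_{i_0}=0$ and $h_{i_0}=0$, which would again be a trivial split mono — another contradiction. Thus the $i_0$-component of $\h'$ is neither split mono nor split epi, and $\h'$ must fall into case (iii) of Theorem \ref{shapeirred}, i.e.\ $\h'$ is sirreducible (with, as a bonus, $h'_{i_0}$ irreducible in $\A$-mod).

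Beyond the componentwise description of exactness in Remark \ref{rem1.1}, the one genuinely non-formal ingredient is the irreducibility of $\h'$ in (ii). This is what I consider the main obstacle, in the sense that it is the step that has to be imported from outside rather than obtained by direct inspection; but it is a classical property of Auslander-Reiten sequences in an exact (indeed Frobenius) category, and once it is on the table the remainder of (ii) is a clean contradiction argument at the single index $i_0$. I do not anticipate any further technical difficulty.
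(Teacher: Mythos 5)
Your proposal is correct and follows essentially the same route as the paper: part (i) is the same componentwise splitting argument via Remark \ref{rem1.1}, and in part (ii) you likewise invoke the irreducibility of $\h'$ as the right-hand morphism of an Auslander--Reiten sequence, apply Theorem \ref{shapeirred}, and eliminate the smonic and sepic cases by examining the $i_0$-component sequence exactly as the paper does. No gaps.
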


\begin{proof}
(i). Assume that $\h$ is smonic.  It follows by Remark \ref{rem1.1} that for all $i\in \Z$, the short exact sequence of $\A$-modules $0\to M_i\xrightarrow{h_i}M'_i\xrightarrow{h'_i}M''_i\to 0$ splits, which implies that $h'_i$ is split epic. This proves that $\h'$ is sepic.  (ii). Assume that  (\ref{almostsplitseq}) is an Auslander-Reiten sequence and that $\h$ is a morphism sirreducible. Therefore there exists a unique $i_0\in \Z$ such that $h_{i_0}:M_{i_0}\to M'_{i_0}$ is an irreducible morphism of $\A$-modules. By using Remark \ref{rem1.1}, we can consider the following short exact sequence of $\A$-modules
\begin{equation}\label{sirreducible}
0\to M_{i_0}\xrightarrow{h_{i_0}}M'_{i_0}\xrightarrow{h'_{i_0}}M''_{i_0}\to 0.
\end{equation}
Since we are assuming that (\ref{almostsplitseq}) is an Auslander-Reiten sequence with $\h$ irreducible, it follow that $\h'$ is also irreducible. By Theorem \ref{shapeirred}, we have that $\h'$ is either smonic, or sepic, or sirreducible. If $\h'$ is smonic, then $h'_{i_0}$ is split mono and thus an isomorphism, which implies that $h_{i_0}$ is zero, which contradict that $h_{i_0}$ is irreducible. On the other hand, if $\h'$ is sepic, then $h'_{i_0}$ is split epic, which implies that (\ref{sirreducible}) splits and thus $h_{i_0}$ is split mono, which again contradicts that $h_{i_0}$ is irreducible. Therefore, $\h'$ has to be sirreducible.  
\end{proof}

\begin{lemma}\label{lemma2.5}
Consider the Auslander-Reiten sequence of $\Ar$-modules as in (\ref{ARrad}). Assume further that $\M'$ has no projective direct summands.
\begin{enumerate}
\item If $\h$ is sepic, then $\h'$ is sirreducible.
\item If $\P$ is as in (\ref{projindec}) with $\Hom_\A(Q,I)$ a simple projective $\A$-module and $\h$ is sirreducible, then $\h'$ is smonic.
\item If $\P$ is as in (\ref{projindec}) with $\Hom_\A(Q,I)$ a non-simple projective $\A$-module and $\h$ is sirreducible, then $\h'$ is sirreducible.
\end{enumerate}
\end{lemma}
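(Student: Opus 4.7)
The plan is to reduce the analysis to two degrees via Remark \ref{rem1.1} and Corollary \ref{cor1.8}, then exploit the rigidity forced by the identity components appearing in $\widehat{\iota}_{\mathrm{rad}\,\P}$ and $\widehat{\pi}_{\P}$ in the short exact sequence \textup{(\ref{ARrad})}. By Corollary \ref{cor1.8}, the module $\M'$ vanishes outside degrees $i_0$ and $i_0+1$, so the conditions on $\h$ and $\h'$ reduce to analyses of the short exact sequences of $\A$-modules
\[ 0\to \mathrm{rad}\,\Hom_\A(Q,I)\xrightarrow{(h_{i_0},\eta)^t} M'_{i_0}\oplus \Hom_\A(Q,I)\xrightarrow{(h'_{i_0},\mathrm{id})} \Hom_\A(Q,I)\to 0 \]
and
\[ 0\to I\xrightarrow{(h_{i_0+1},\mathrm{id}_I)^t} M'_{i_0+1}\oplus I\xrightarrow{(h'_{i_0+1},\pi_I)} I/\mathrm{soc}\, I\to 0, \]
where $\eta$ is the canonical inclusion and $\pi_I\colon I\to I/\mathrm{soc}\, I$ is the canonical projection.

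The core step is to establish two rigidity statements that hold regardless of the hypotheses on $\h$. In degree $i_0+1$, the left map is split mono via the projection to the second factor (because its second component is $\mathrm{id}_I$), so the sequence splits and the induced isomorphism on cokernels shows that $h'_{i_0+1}\colon M'_{i_0+1}\to I/\mathrm{soc}\, I$ is an isomorphism, while $h_{i_0+1}$ coincides (up to this iso) with the natural projection $\pi_I$. In degree $i_0$, the right map is split epi via the inclusion into the second factor, so again the sequence splits; combined with the exactness relation $h'_{i_0}\circ h_{i_0}=-\eta$, the injectivity of $\eta$, and a dimension count, this forces $h_{i_0}$ to be an isomorphism and $h'_{i_0}$ to factor as an iso $M'_{i_0}\xrightarrow{\sim} \mathrm{rad}\,\Hom_\A(Q,I)$ followed by $\eta$. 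I would then invoke the standard facts that for any indecomposable non-simple projective (resp.\ injective) $\A$-module $P$ (resp.\ $J$), the inclusion $\mathrm{rad}\,P\hookrightarrow P$ (resp.\ the projection $J\twoheadrightarrow J/\mathrm{soc}\, J$) is an irreducible morphism in $\A$-\textup{mod}.

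The three parts then follow by direct case analysis. For (i), $\h$ being sepic forces $h_{i_0+1}\cong \pi_I$ to be split epi, which (since $I$ is indecomposable injective) requires $I$ to be simple; then $M'_{i_0+1}=0$, and the nonvanishing of $\M'$ forces $\Hom_\A(Q,I)$ to be non-simple, so $h'_{i_0}$ is the irreducible inclusion and $h'_{i_0+1}=0$, making $\h'$ sirreducible. For (ii), $\Hom_\A(Q,I)$ simple gives $M'_{i_0}=0$ and $h'_{i_0}=0$ (trivially split mono), while $h'_{i_0+1}$ is an isomorphism, so $\h'$ is smonic. For (iii), $\Hom_\A(Q,I)$ is non-simple by hypothesis, and $I$ must also be non-simple (otherwise $\h$ would possess no irreducible component, since $h_{i_0}$ is always an isomorphism); hence $h'_{i_0}$ is an irreducible inclusion and $h'_{i_0+1}$ is an iso, giving $\h'$ sirreducible. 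The main technical obstacle will be the rigidity identifications for $h'_{i_0+1}$ and $h_{i_0}$---these bypass any need to decompose $\M'$ into indecomposable summands---after which the case analysis is routine, the only remaining subtlety being the compatibility check that each hypothesis in (i)--(iii) is consistent with $\M'\neq 0$ so that the conclusions are non-vacuous.
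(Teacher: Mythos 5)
Your proof is correct, and it shares the paper's overall reduction: Corollary \ref{cor1.8} confines everything to degrees $i_0$ and $i_0+1$, and one then analyzes the diagram (\ref{diagram}) column by column. But your key mechanism is genuinely different and arguably cleaner. The paper treats each of (i)--(iii) separately, extracting the properties of $h_{i_0}$, $h_{i_0+1}$, $h'_{i_0}$, $h'_{i_0+1}$ from the hypotheses on $\h$ via irreducibility arguments (e.g.\ from $\pi_I=-h'_{i_0+1}\circ h_{i_0+1}$ with $\pi_I$ irreducible one deduces that a factor splits), combined with the mono/epi statements of Corollary \ref{cor1.8}(i) and the indecomposability of $I$. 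You instead observe, unconditionally, that both degreewise short exact sequences split because of the identity components contributed by $\widehat{\iota}_{\mathrm{rad}\,\P}$ and $\widehat{\pi}_{\P}$; the resulting kernel/cokernel identifications force $h_{i_0}$ and $h'_{i_0+1}$ to be isomorphisms and pin down $\M'\cong\mathrm{rad}\,\P/\mathrm{soc}\,\P$ with $h_{i_0+1}\cong\pi_I$ and $h'_{i_0}\cong\eta$ once and for all, after which the three cases reduce to deciding when $\mathrm{rad}\,\Hom_\A(Q,I)$ and $I/\mathrm{soc}\,I$ vanish. I checked the two rigidity claims and they are sound. This front-loading buys uniformity and makes transparent why the sub-case ``$h_{i_0}$ irreducible and $h_{i_0+1}$ split mono,'' which the paper rules out by contradiction in part (iii), can never occur. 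One caveat shared with the paper: part (i) tacitly requires $\M'\neq 0$ (if $\P$ has length two then $\h=0$ is vacuously sepic while $\h'$ is smonic); you flag this explicitly, and it is harmless in the intended application to Theorem \ref{thm2}, where $\h$ is irreducible.
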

\begin{proof}
(i). Assume that the non-zero terms of $\P$ are in degree $i_0$ and $i_0+1$ for some $i_0\in \Z$. It follows by Corollary \ref{cor1.8} that if $\M'=(M'_i,f'_i)_{i\in \Z}$, then $M'_i=0$ for all $i\not=i_0,i_0+1$. This implies that in this situation the Auslander-Reiten sequence (\ref{ARrad}) can be viewed as follows.

\begin{equation}\label{diagram}
\xymatrix{
	0\ar[d]:&	\cdots \ar@{~>}[r] & \ar[d]0\ar@{~>}[r] &\ar[d]0\ar@{~>}[r]& \ar[d]0\ar@{~>}[r] &\ar[d] 0\ar@{~>}[r]&\cdots \\	
\mathrm{rad}\, \P:\ar[d]^{(\h,\, \widehat{\iota}_{\mathrm{rad}\, \P})^t}	&	\cdots \ar@{~>}[r] & 0\ar[d]\ar@{~>}[r] &\mathrm{rad}\, \Hom_\A(Q,I)\ar@{~>}[r]^{\hspace*{25pt}\overline{\varphi_I}}\ar[d]^{(h_{i_0},\, \iota)^t}& I\ar[d]^{(h_{i_0+1},\, \mathrm{id}_I)^t}\ar@{~>}[r] & 0\ar[d]\ar@{~>}[r]&\cdots\\
\M'\oplus \P:\ar[d]^{(\h', \widehat{\pi}_{\P})}	&	\cdots \ar@{~>}[r] & 0\ar[d]\ar@{~>}[r] & M'_{i_0}\oplus \Hom_\A(Q,I)\ar[d]^{(h'_{i_0},\, \mathrm{id}_{ \Hom_\A(Q,I)})}\ar@{~>}[r]^{\hspace*{25pt} s_{i_0}}& M'_{i_0+1}\oplus I\ar@{~>}[r] \ar[d]^{(h'_{i_0+1},\,  \pi_{I})}& 0\ar[d]\ar@{~>}[r]&\cdots\\
	\widehat{P}/\mathrm{soc}\, \widehat{P}:\ar[d]	&	\cdots \ar@{~>}[r] & 0\ar[d]\ar@{~>}[r] & \Hom_\A(Q,I)\ar[d]\ar@{~>}[r]^{h_{i_0}}& I/\mathrm{soc}\, I\ar[d]\ar@{~>}[r] & 0\ar[d]\ar@{~>}[r]&\cdots\\	
	0&	\cdots \ar@{~>}[r] & 0\ar@{~>}[r] & 0\ar@{~>}[r]& 0\ar@{~>}[r] & 0\ar@{~>}[r]&\cdots
}
\end{equation}
where the non-zero columns are short exact sequences of $\A$-modules and $\pi_{I}:I\to I/\mathrm{soc}\, I$ is the natural projection. Therefore, $\iota=-h_{i_0}'\circ h_{i_0}$ and $\pi_{I}=-h'_{i_0+1}\circ h_{i_0+1}$. It follows that $h_{i_0}$ is a monomorphism and since $\h$ is sepic, we also have that $h_{i_0}$ is also split epic, which implies that $h_{i_0}$ is an isomorphism. Since $\iota: \mathrm{rad}\, \Hom_\A(Q,I)\to \Hom_\A(Q,I)$ is irreducible, we obtain that $h'_{i_0}$ is also irreducible in $\A$-mod. On the other hand, we also have that $h_{i_0+1}$ is split epic, which implies that $I=M'_{i_0+1}\oplus \ker h_{i_0+1}$. Suppose that $\ker h_{i_0+1}=0$, i.e. $h_{i_0+1}$ is a monomorphism. Since $\h'$ is a monomorphism of $\Ar$-modules by Corollary \ref{cor1.8} (i), it follows by Remark \ref{rem1.1} that $h'_{i_0+1}$ is also a monomorphism of $\A$-modules, which implies that $\pi_{I}$ is also a monomorphism. Therefore $\pi_{I}$ is also an isomorphism, which contradicts the fact that $\pi_{I}$ is an irreducible morphism in $\A$-mod. Hence $\ker h_{i_0+1}\not=0$. Since $I$ is indecomposable, we obtain that $M'_{i_0+1}=0$, $h_{i_0+1}=0$, $h'_{i_0+1}=0$ and thus $\pi_{I}=0$.  Thus $\h'$ is sirreducible. 

(ii). As before, we associate to the Auslander-Reiten sequence (\ref{ARrad}) a diagram as in (\ref{diagram}). Assume next that $\h$ is sirreducible and that the term $\Hom_\A(Q, I)$ in $\P$ is a simple projective $\A$-module.  This implies that $\mathrm{rad}\, \Hom_\A(Q, I)=0$. By Corollary \ref{cor1.8} (i), we have that $\h$ is un epimorphism, which implies that $h_{i_0}=0$, $M'_{i_0}=0$ and $h'_{i_0}=0$ which is trivially a split mono. Since $\h$ is sirreducible, we have that $h_{i_0+1}$ is an irreducible morphism of $\A$-modules which together with the fact that $\pi_{I}$ is also irreducible, we obtain that $h'_{i_0+1}$ is a split epi morphism of $\A$-modules. On the other hand, again by Corollary \ref{cor1.8} (i) we obtain that $\h'$ is a monomorphism of $\Ar$-modules, which together with Remark {rem1.1} gives us that $h'_{i_0+1}$ is an isomorphism, which is trivially a split mono. Thus $\h'$ is smonic. 

(iii). As before, we can consider the diagram (\ref{diagram}), where $h'_{i_0}$ and $h'_{i_0+1}$ are both monomorphisms of $\A$-modules. In particular, we have that $h'_{i_0+1}$ is an isomorphism. Since $\h$ is sirreducible, it follows that either $h_{i_0}$ is irreducible and $h_{i_0+1}$ is a split mono, or $h_{i_0}$ is split epi and $h_{i_0+1}$ is irreducible morphism of $\A$-modules. If the first case holds, then together with the fact that $\iota: \mathrm{rad}\, \Hom_\A(Q,I)\to \Hom_\A(Q,I)$ is irreducible, we obtain that $h'_{i_0}$ is split epi, which further implies that $h'_{i_0}$ is an isomorphism. This proves that $\h'$ is an isomorphism which contradicts that $\h'$ is irreducible morphism of $\Ar$-modules. It follows that $h_{i_0}$ is split epi and $h_{i_0+1}$ is irreducible morphism of $\A$-modules. Since $h'_{i_0}$ cannot be split epi for this will give again the above contradiction, it follows that $h_{i_0}$ is also split mono, which implies that $h_{i_0}$ is an isomorphism. This gives us that $h'_{i_0}$ is irreducible and hence $\h'$ is sirreducible. This finishes the proof of Lemma \ref{lemma2.5}.
\end{proof}

\begin{remark}
Note that in the situation of Lemma \ref{lemma2.5} (i), we obtain that if $\P$ is as in (\ref{projindec}), then $I$ is a simple injective $\A$-module. 
\end{remark}

\begin{proof}[Proof of Theorem \ref{thm2}]
Assume that (\ref{triangle}) is an Auslander-Reiten triangle, where $\M$, $\M'$ and $\M''$ have no projective direct summands as objects in $\Ar$-\textup{mod}. Then by Theorem \ref{thm0}, there exists an Auslander-Reiten sequence as in (\ref{ARseq}) that induces (\ref{triangle}) and such that $\underline{\h}'$ is equal to $\underline{\h}''$ module an isomorphism. Thus by Corollary \ref{cor0}, we can assume that $\h'=\h''$. If $\underline{\h}$ is stably smonic, then by definition $\h$ is smonic, which implies that $\h$ is also a monomorphism. Assume that $\P\not=0$ in (\ref{ARseq}). Then by Theorem \ref{thm0}, it follows that $\P$ is indecomposable projective $\Ar$-module and the Auslander-Reiten sequence (\ref{ARseq}) is as in (\ref{ARrad}), which together with Corollary \ref{cor1.8} (i) gives that $\h$ is also an epimorphism. This contradicts the fact that $\h$ is irreducible, and thus $\P=0$. Then by Lemma  \ref{lem2.4} (i) we obtain that $\underline{\h}'$ is sepic and thus $\underline{\h}'$ is stably sepic. Next assume that $\underline{\h}$ is stably sepic. In this situation, if $\P=0$ in (\ref{ARseq}), then we obtain that $\h$ is an isomorphism which again contradicts that $\h$ is irreducible. Thus $\P\not=0$ and the Auslander-Reiten sequence (\ref{ARseq}) becomes as in (\ref{ARrad}).  Then by Lemma \ref{lemma2.5} (i), we obtain that $\h'$ is sirreducible, which gives that $\underline{\h}'$ is stably sirreducible. Finally assume that $\underline{\h}$ is stably sirreducible. If $\P=0$, then the Auslander-Reiten sequence (\ref{ARseq}) is as in (\ref{almostsplitseq}). By Lemma \ref{lem2.4} (ii), we obtain that $\h'$ is sirreducible. If $\P\not=0$, then as before (\ref{ARseq}) is as in (\ref{ARrad}) and $\P$ is an indecomposable projective $\Ar$-module, i.e. $\P$ is of the form (\ref{projindec}). If the term $\Hom_\A(Q,I)$ of $\P$ is simple, then we obtain by Lemma \ref{lemma2.5} (ii) that $\h'$ is smonic and thus $\underline{\h}'$ is stably smonic. Similarly, if $\Hom_\A(Q,I)$ is non-simple, then by Lemma \ref{lemma2.5} (iii) we obtain that $\h'$ is sirreducible and thus $\underline{\h}'$ is stably sirreducible. This finishes the proof of Theorem \ref{thm2}.

\end{proof}

\section{An example}\label{sec4}
Let $\A=\k Q/\langle \rho\rangle$ be the basic finite dimensional algebra whose quiver with relations is given as follows:
\begin{align*}
\xymatrix{&\overset{3}{\bullet}\ar[d]^{\alpha}\\
		\underset{4}{\bullet} \ar@(dl,ul)[]^{\large \lambda}\ar[r]^{\beta} & \underset{2}{\bullet}\ar[d]^{\theta}\\
	&\underset{1}{\bullet}}&& \rho=\{\alpha\theta, \lambda^2\}.
\end{align*}
Observe that $\A$ is a gentle algebra in the sense of \cite{assem} of infinite global dimension. It follows from the results in \cite{schroer} that $\Ar$ is special biserial (in the sense of \cite{wald}) and that $\Ar=\k \widehat{Q}/\langle \widehat{\rho}\rangle$, where $\widehat{Q}$ is given by 
\begin{equation*}
\xymatrix{\cdots&\overset{3_{z+1}}{\bullet}\ar[d]^{\alpha_{z+1}}&&&\overset{3_z}{\bullet}\ar[d]^{\alpha_z}&&&\overset{3_{z-1}}{\bullet}\ar[d]^{\alpha_{z-1}}&\cdots\\
	\underset{4_{z+1}}{\bullet} \ar@(dl,ul)[]^{\large \lambda_{z+1}}\ar[r]^{\beta_{z+1}} & \underset{2_{z+1}}{\bullet}\ar[d]^{\theta_{z+1}}\ar@/^{3mm}/[urrr]^{\widehat{\overline{\alpha}}_{z+1}}&&\underset{4_z}{\bullet} \ar@(dl,ul)[]^{\large \lambda_z}\ar[r]^{\beta_z} &\underset{2_z}{\bullet}\ar[d]^{\theta_z}\ar@/^{3mm}/[urrr]^{\widehat{\overline{\alpha}}_z}&&\underset{4_{z-1}}{\bullet} \ar@(dl,ul)[]^{\large \lambda_{z-1}}\ar[r]^{\beta_{z-1}} &\underset{2_{z-1}}{\bullet}\ar[d]^{\theta_{z-1}}\\
	\cdots&\underset{1_{z+1}}{\bullet}\ar@/_{3mm}/[urr]_{\widehat{\overline{q}}_{z+1}}&&&\underset{1_z}{\bullet}\ar@/_{2mm}/[urr]_{\widehat{\overline{q}}_{z}}&&&\underset{1_{z}}{\bullet}&\cdots}
\end{equation*}
and $\widehat{\rho}$ is given by 
\begin{equation*}
\left\{
\begin{array}{c|c}
\alpha_z\theta_z,\lambda_z^2, \beta_z\widehat{\overline{\alpha}}_z, \widehat{\overline{q}}_z\beta_{z-1}\alpha_z\widehat{\overline{\alpha}}_z \alpha_{z-1},\widehat{\overline{\alpha}}_z \alpha_{z-1}\widehat{\overline{\alpha}}_{z-1},&\\
\lambda_z\beta_z\theta_z\widehat{\overline{q}}_z\lambda_{z-1},\beta_z\theta_z\widehat{\overline{q}}_z\lambda_{z-1}\beta_{z-1},\theta_z\widehat{\overline{q}}_z\lambda_{z-1}\beta_{z-1}\theta_{z-1},\widehat{\overline{q}}_z\lambda_{z-1}\beta_{z-1}\theta_{z-1}\widehat{\overline{q}}_{z-1},& z\in \Z\\ 
\beta_z\theta_z\widehat{\overline{q}}_z\lambda_{z-1}-\lambda_z\beta_z\theta_z\widehat{\overline{q}}_z, \widehat{\overline{\alpha}}_z \alpha_{z-1}- \theta_z\widehat{\overline{q}}_z\lambda_{z-1}\beta_{z-1}&
\end{array}
\right\}.
\end{equation*}
It follows that the radical series of the indecomposable projective $\Ar$-modules $\widehat{Q}$ can be represented as follows. 

\begin{equation*}
	\begindc{\commdiag}[300]
	\obj(-5,0)[p1]{$	\scalebox{0.75}{\begindc{\commdiag}[150]
		\obj(0,0)[p1]{$1_z$}
		\obj(0,-2)[p2]{$4_{z-1}$}
		\obj(0,-4)[p3]{$4_{z-1}$}
		\obj(0,-6)[p4]{$2_{z-1}$}
		\obj(0,-8)[p5]{$1_{z-1}$}
		\obj(-2,-4)[p6]{$\P_{1_{z}}=$}
		\obj(1,-4)[p7]{$,$}
		\mor{p1}{p2}{}
		\mor{p2}{p3}{}
		\mor{p3}{p4}{}
		\mor{p4}{p5}{}
		\enddc}$}
	\obj(-2,0)[p2]{$\scalebox{0.75}{\begindc{\commdiag}[150]
		\obj(0,0)[p1]{$2_z$}
		\obj(-2,-2)[p2]{$1_{z}$}
		\obj(-2,-4)[p3]{$4_{z-1}$}
		\obj(-2,-6)[p4]{$4_{z-1}$}
		\obj(0,-8)[p5]{$2_{z-1}$}
		\obj(2,-4)[p6]{$3_{z-1}$}
		\obj(-4,-4)[p7]{$\P_{2_z}=$}	
		\obj(4,-4)[p8]{$,$}	
		\mor{p1}{p2}{}
		\mor{p2}{p3}{}
		\mor{p3}{p4}{}
		\mor{p4}{p5}{}
		\mor{p1}{p6}{}
		\mor{p6}{p5}{}
		\enddc}$}
	\obj(1,0)[p3]{$\scalebox{0.75}{\begindc{\commdiag}[150]
				\obj(0,0)[p1]{$3_z$}
		\obj(0,-2)[p2]{$2_z$}
		\obj(0,-4)[p3]{$3_{z-1}$}
		\obj(-2,-2)[p4]{$\P_{3_z}=$}
			\obj(2,-2)[p5]{$\mbox{y}$}
			\mor{p1}{p2}{}
			\mor{p2}{p3}{}			
		\enddc}$}
	
	\obj(4,0)[p5]{$\scalebox{0.75}{\begindc{\commdiag}[150]
		\obj(0,0)[p1]{$4_z$}
		\obj(-2,-2)[p2]{$4_z$}
		\obj(-2,-4)[p3]{$2_z$}
		\obj(-2,-6)[p4]{$1_z$}
		\obj(0,-8)[p5]{$4_{z-1}$}
		\obj(2,-2)[p6]{$2_z$}
		\obj(2,-4)[p7]{$1_z$}
		\obj(2,-6)[p8]{$4_{z-1}$}
		\obj(-4,-4)[p9]{$\P_{4_z}=$}
		\mor{p1}{p2}{}
		\mor{p2}{p3}{}
		\mor{p3}{p4}{}
		\mor{p4}{p5}{}
		\mor{p1}{p6}{}
		\mor{p6}{p7}{}
		\mor{p7}{p8}{}
		\mor{p8}{p5}{}
		\enddc}$}
	\enddc
	\end{equation*}
Since $\A$ and $\Ar$ are both special biserial, it follows that all the indecomposable non-projective $\A$-modules and $\Ar$-modules, respectively can be represented combinatorially by using so-called strings and bands for $\A$ and $\Ar$, respectively. The corresponding indecomposable modules are called string and band $\A$-modules and $\Ar$-modules. respectively. Moreover, the irreducible morphisms between string $\Ar$-modules are completely described by using so-called hooks and co-hooks. If $S$ is a string for $\A$, we denote by $M[S]$ the corresponding string $\A$-module. In particular, if $v$ is a vertex of the quiver of $\A$, we denote by $M[\1_v]$ the corresponding simple $\A$-module. On the other hand, if $\widehat{S}$ is a string for $\Ar$, we denote by $\M[\widehat{S}]$ the corresponding string $\Ar$-module, and if $\hat{v}$ is a vertex of the quiver of $\Ar$, then $\M[\1_{\hat{v}}]$ denotes the corresponding simple $\Ar$-module. For more details and definitions of strings and band modules see \cite{buri}. 

It follows that the component of the stable Auslander-Reiten quiver of $\Ar$ that contains the simple $\Ar$-modules corresponding to the vertices $1_z$, $2_z$ and $3_z$ looks like as in Figure \ref{fig1}. Note that this component is of type $\Z\mathbb{A}_\infty$.

\begin{figure}[ht]
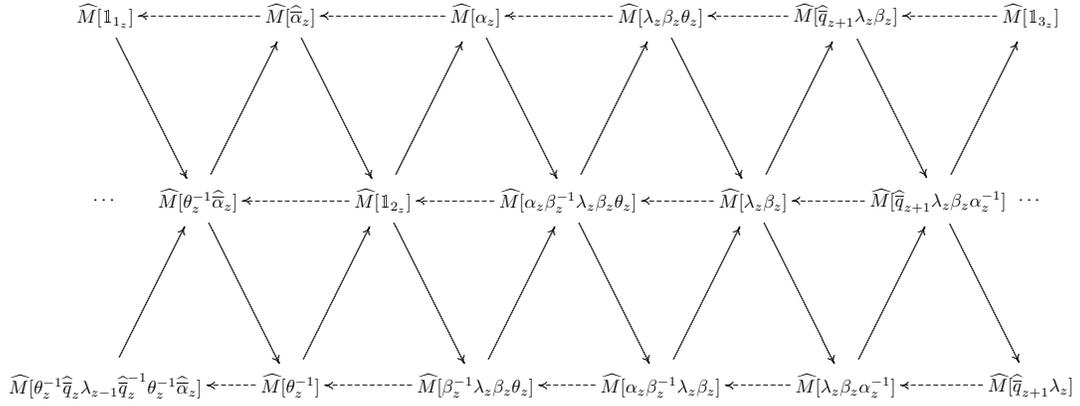

\centering
$$
		\scalebox{0.70}{
		
	\begindc{\commdiag}[250]
	\obj(-12,4)[p1]{$\M[\1_{1_z}]$}
	\obj(-8,4)[p2]{$\M[\widehat{\overline{\alpha}}_z]$}
	\obj(-4,4)[p3]{$\M[\alpha_z]$}	
	\obj(0,4)[p4]{$\M[\lambda_z\beta_z\theta_z]$}		
	\obj(4,4)[p5]{$\M[\widehat{\overline{q}}_{z+1}\lambda_z\beta_z]$}
	\obj(8,4)[p6]{$\M[\1_{3_z}]$}
	\obj(-10,0)[p7]{$\M[\theta_z^{-1}\widehat{\overline{\alpha}}_z]$}
	\obj(-6,0)[p8]{$\M[\1_{2_z}]$}
	\obj(-2,0)[p9]{$\M[\alpha_z\beta_z^{-1}\lambda_z\beta_z\theta_z]$}	
	\obj(2,0)[p10]{$\M[\lambda_z\beta_z]$}		
	\obj(6,0)[p11]{$\M[\widehat{\overline{q}}_{z+1}\lambda_z\beta_z\alpha_z^{-1}]$}
	\obj(-12,-4)[p12]{$\M[\theta_z^{-1}\widehat{\overline{q}}_z\lambda_{z-1}\widehat{\overline{q}}_z^{-1}\theta_z^{-1}\widehat{\overline{\alpha}}_z]$}
	\obj(-8,-4)[p13]{$\M[\theta_z^{-1}]$}
	\obj(-4,-4)[p14]{$\M[\beta_z^{-1}\lambda_z\beta_z\theta_z]$}		
	\obj(0,-4)[p15]{$\M[\alpha_z\beta_z^{-1}\lambda_z\beta_z]$}	
	\obj(4,-4)[p16]{$\M[\lambda_z\beta_z\alpha_z^{-1}]$}
	\obj(8,-4)[p17]{$\M[\widehat{\overline{q}}_{z+1}\lambda_z]$}
 \obj(8,0)[p19]{$\cdots$}
	\obj(-12,0)[p18]{$\cdots$}
		\mor{p1}{p7}{}
		\mor{p7}{p2}{}
		\mor{p7}{p13}{}
		\mor{p2}{p8}{}
		\mor{p8}{p3}{}
		\mor{p8}{p7}{}[+1,1]
		\mor{p8}{p14}{}
		\mor{p9}{p4}{}
		\mor{p10}{p5}{}
		\mor{p12}{p7}{}
		\mor{p11}{p6}{}
		\mor{p13}{p8}{}
		\mor{p14}{p9}{}
		\mor{p15}{p10}{}
		\mor{p16}{p11}{}
		\mor{p9}{p15}{}
		\mor{p10}{p16}{}
		\mor{p11}{p17}{}
		\mor{p3}{p9}{}
		\mor{p4}{p10}{}
		\mor{p5}{p11}{}
     	\mor{p2}{p1}{}[+1,1]
	   \mor{p3}{p2}{}[+1,1]
	   \mor{p4}{p3}{}[+1,1]
	   \mor{p5}{p4}{}[+1,1]
	   \mor{p6}{p5}{}[+1,1]
	   \mor{p9}{p8}{}[+1,1]
	   \mor{p10}{p9}{}[+1,1]
	   \mor{p11}{p10}{}[+1,1]
	   \mor{p13}{p12}{}[+1,1]
	   \mor{p14}{p13}{}[+1,1]
	   \mor{p15}{p14}{}[+1,1]
	   \mor{p16}{p15}{}[+1,1]
	   \mor{p17}{p16}{}[+1,1]
	\enddc}
	$$	
\caption{The component of the stable Auslander-Reiten quiver of $\Ar$ containing the simple $\Ar$-modules corresponding to the vertices $1_z$, $2_z$ and $3_z$ with $z\in \Z$.}\label{fig1}
\end{figure}

In the following, we check that $\Ar$ verifies all the possibilities in Theorem \ref{thm2}.    In the followsing $z\in\Z$ is a fixed both arbitrary integer. 
\begin{enumerate}

\item Consider the Auslander-Reiten triangle 
\begin{equation}\label{exam1}
\M[\theta_z^{-1}\widehat{\overline{\alpha}}_z]\xrightarrow{\underline{\h}}\M[\widehat{\overline{\alpha}}]\oplus \M[\theta_z^{-1}]\xrightarrow{\underline{\h}'}\M[\1_{2_z}]\xrightarrow{\underline{\h}''}\Omega^{-1}\M[\theta_z^{-1}\widehat{\overline{\alpha}}]. 
\end{equation}
Then (\ref{exam1}) can be represented as follows:
$$\xymatrix{
\M[\theta_z^{-1}\widehat{\overline{\alpha}}_z]:\ar[d]_(.4){\underline{\h}}^(0.4){}&\cdots\ar@{~>}[r]&0\ar[d]\ar@{~>}[r]&M[\theta]\ar[d]\ar@{~>}[r]&M[\1_3]\ar[d]^{\mathrm{id}_{M[\1_3]}}\ar@{~>}[r]&\cdots\\
\M[\widehat{\overline{\alpha}}]\oplus \M[\theta_z^{-1}]:\ar[d]_(.4){\underline{\h}'}^(0.4){}&\cdots\ar@{~>}[r]&0\ar[d]\ar@{~>}[r]&M[\theta]\oplus M[\1_2]\ar@{~>}[r]\ar[d]&M[\1_{3}]\ar[d]\ar@{~>}[r]&\cdots\\
				\M[\1_{2_z}]\:\ar[d]_(.5){\underline{\h}''}&\cdots\ar@{~>}[r]&0\ar[d]\ar@{~>}[r]&M[\1_2]\ar[d]\ar@{~>}[r]&0\ar[d]\ar@{~>}[r]&\cdots\\
				\Omega^{-1}\M[\theta_z^{-1}\widehat{\overline{\alpha}}_z]:&\cdots\ar@{~>}[r]&M[\1_1]\ar@{~>}[r]&M[\lambda\beta\alpha^{-1}]\ar@{~>}[r]&0\ar@{~>}[r]&\cdots}$$ 
where the non-zero columns are in degrees $z-1$, $z$ and $z+1$. Note that $\underline{\h}$ is smonic and $\underline{\h}'$ is sepic. This verifies Theorem \ref{thm2} (i).  

\item Consider the Auslander-Reiten triangle 
\begin{equation}\label{exam2}
 \M[\widehat{\overline{\alpha}}_z]\xrightarrow{\underline{\h}}\M[\1_{2_z}]\xrightarrow{\underline{\h}'}\M[\alpha_z]\xrightarrow{\underline{\h}''}\Omega^{-1}\M[\widehat{\overline{\alpha}}_z]. 
\end{equation}
Then (\ref{exam2}) can be represented as follows
$$\xymatrix{\M[\widehat{\overline{\alpha}}]:\ar[d]_(.4){\underline{\h}}^(0.4){}&\cdots\ar@{~>}[r]&M[\1_2]\ar[d]^{\mathrm{id}_{M[\1_2]}}\ar@{~>}[r]&M[\1_3]\ar[d]\ar@{~>}[r]&\cdots\\
				\M[\1_{2_z}]:\ar[d]_(.4){\underline{\h}'}^(0.4){}&\cdots\ar@{~>}[r]&M[\1_2]\ar@{~>}[r]\ar[d]&0\ar[d]\ar@{~>}[r]&\cdots\\
				\M[\alpha_z]:\ar[d]_(.5){\underline{\h}''}&\cdots\ar@{~>}[r]&M[\alpha]\ar[d]\ar@{~>}[r]&0\ar[d]\ar@{~>}[r]&\cdots\\
				\Omega^{-1}\M[\widehat{\overline{\alpha}}]:&\cdots\ar@{~>}[r]&M[\1_3]\ar@{~>}[r]&0\ar@{~>}[r]&\cdots}$$ 
where the non-zero columns are in degrees $z$ and $z+1$. Note that the morphism $M[\1_2]\to M[\alpha]$ is irreducible and thus $\underline{\h}'$ is stably sirreducible. Note also that $\underline{\h}$ is stably sepic. This verifies Theorem \ref{thm2} (ii).

\item  Consider next the Auslander-Reiten triangle
\begin{equation}\label{exam3}
 \M[\lambda_z\beta_z\theta_z]\xrightarrow{\underline{\h}}\M[\lambda_z\beta_z]\xrightarrow{\underline{\h}'}\M[\widehat{\overline{q}}_{z+1}\lambda_z\beta_z]\xrightarrow{\underline{\h}''}\Omega^{-1} \M[\lambda_z\beta_z\theta_z]
 \end{equation}
Then (\ref{exam3}) can be represented as follows:

$$\xymatrix{\M[\lambda_z\beta_z\theta_z]:\ar[d]_(.4){\underline{\h}}^(0.4){}&\cdots\ar@{~>}[r]&0\ar[d]\ar@{~>}[r]&M[\lambda\beta\theta]\ar[d]^{}\ar@{~>}[r]&\cdots\\
			\M[\lambda_z\beta_z]:\ar[d]_(.4){\underline{\h}'}^(0.4){}&\cdots\ar@{~>}[r]& 0\ar@{~>}[r]\ar[d]^{}&M[\lambda\beta]\ar[d]^{\mathrm{id}_{M[\lambda\beta]}}\ar@{~>}[r]&\cdots\\
			\M[\widehat{\overline{q}}_{z+1}\lambda_z\beta_z]:\ar[d]_(.5){\underline{\h}''}&\cdots\ar@{~>}[r]&M[\1_1]\ar[d]\ar@{~>}[r]&M[\lambda\beta]\ar[d]\ar@{~>}[r]&\cdots\\
			\Omega^{-1} \M[\lambda_z\beta_z\theta_z]:&\cdots\ar@{~>}[r]&M[\1_1]\ar@{~>}[r]&0\ar@{~>}[r]&\cdots}$$

where the non-zero columns are in degrees $z$ and $z+1$. Note that the morphism $M[\lambda\beta\theta]\to M[\lambda\beta]$ is irreducible and thus $\underline{\h}$ is stably sirreducible. Note also that $\underline{\h}'$ is stably smonic. This verifies the first situation of Theorem \ref{thm2} (iii).  

\item Finally, consider the Auslander-Reiten triangle 
\begin{equation}\label{exam4}
\M[\1_{1_z}]\xrightarrow{\underline{\h}}\M[\theta_z^{-1}\widehat{\overline{\alpha}}_z]\xrightarrow{\underline{\h}'}\M[\widehat{\overline{\alpha}}_z]\xrightarrow{\underline{\h}''}\Omega^{-1}\M[\1_{1_z}]. 
\end{equation}
Then (\ref{exam4}) can be represented as follows:
$$\xymatrix{\M[\1_{1_z}]:\ar[d]_(.4){\underline{\h}}^(0.4){}&\cdots\ar@{~>}[r]&0\ar[d]\ar@{~>}[r]&M[\1_1]\ar[d]\ar@{~>}[r]&0\ar[d]\ar@{~>}[r]&\cdots\\
				\M[\theta_z^{-1}\widehat{\overline{\alpha}}_z]:\ar[d]_(.4){\underline{\h}'}^(0.4){}&\cdots\ar@{~>}[r]&0\ar[d]\ar@{~>}[r]&M[\theta^{-1}]\ar@{~>}[r]\ar[d]&M[\1_3]\ar[d]^{\mathrm{id}_{M[\1_3]}}\ar@{~>}[r]&\cdots\\
				\M[\widehat{\overline{\alpha}}_z]:\ar[d]_(.5){\underline{\h}''}&\cdots\ar@{~>}[r]&0\ar[d]\ar@{~>}[r]&M[\1_2]\ar[d]\ar@{~>}[r]&M[\1_3]\ar[d]\ar@{~>}[r]&\cdots\\
				\Omega^{-1} \, \M[\1_{1_z}]:&\cdots\ar@{~>}[r]&M[\1_1]\ar@{~>}[r]&M[(\lambda\beta)^{-1}]\ar@{~>}[r]&0\ar@{~>}[r]&\cdots}$$ 
where the non-zero columns are in degrees $z-1$, $z$ and $z+1$. Note that the morphisms $M[\1_1]\to M[\theta^{-1}]$ and $M[\theta^{-1}]\to M[\1_2]$ are both irreducible in $\A$-mod. Thus $\underline{\h}$ and $\underline{\h}'$ are both stably sirreducible. This verifies the second situation of Theorem \ref{thm2} (iii).  

\end{enumerate}
\subsection*{Acknoledgments}
All the authors would like to express their gratitude to Professor Raymundo Bautista for providing the main ideas used to prove Theorem \ref{thm0} and for thoughtful comments and suggestion regarding this research, during the visit of the first and the second author to the Universidad Nacional Aut\'onoma de M\'exico in Morelia during March 2019. Part of this research was also performed during the visit of the first author to third one at the Valdosta State University during Spring 2018.   
\bibliographystyle{amsplain}
\bibliography{Auslander-Reiten_Triangles}

\providecommand{\bysame}{\leavevmode\hbox to3em{\hrulefill}\thinspace}
\providecommand{\MR}{\relax\ifhmode\unskip\space\fi MR }
\providecommand{\MRhref}[2]{%
  \href{http://www.ams.org/mathscinet-getitem?mr=#1}{#2}
}
\providecommand{\href}[2]{#2}
\begin{thebibliography}{10}

\bibitem{assem3}
I.~Assem, I.~Simson, and A.~Skowro\'nski, \emph{Elements of the
  {R}epresentation {T}heory of {A}ssociative {A}lgebras}, vol.~1, London
  Mathematical Society Student Texts, no.~65, Cambridge University Press, 2006.

\bibitem{assem}
I.~Assem and A.~Skowro\'nski, \emph{Iterated tilted algebras of type
  {$\widetilde{\mathbb{A}}_n$}}, Math. Z. \textbf{195} (1987), 269--290.

\bibitem{barot-mendoza}
M.~Barot and O.~Mendoza, \emph{An explicit construction for the {H}appel
  functor}, Colloq. Math. \textbf{428} (2015), 141--149.

\bibitem{bautista1}
R.~Bautista and M.~J. Souto~Salorio, \emph{Irreducible morphisms in the bounded
  derived category}, J. Pure Appl. Algebra \textbf{104} (2006), no.~1,
  866--884.

\bibitem{buri}
M.~C.~R. Butler and C.~M. Ringel, \emph{{A}uslander-{R}eiten sequences with few
  middle terms and applications to string algebras}, Comm. Algebra \textbf{15}
  (1987), 145--179.

\bibitem{chenzhang}
X.~W. Chen and P.~Zhang, \emph{Quotient triangulated categories}, Manuscripta
  Math. \textbf{123} (2007), 167--183.

\bibitem{giraldo}
H.~Giraldo, \emph{Irreducible morphisms between modules over a repetitive
  algebras}, Algebr. Represent. Theor. \textbf{21} (2018), no.~4, 683--702.

\bibitem{giraldo-merklen}
H.~Giraldo and H.~Merklen, \emph{Irreducible morphisms of categories of
  complexes}, J. Algebra \textbf{321} (2009), no.~10, 2716--2736.

\bibitem{happel}
D.~Happel, \emph{Triangulated {C}ategories in the {R}epresentation {T}heory of
  {F}inite {D}imensional {A}lgebras}, London Mathematical Society Lecture Notes
  Series, no. 119, Cambridge University Press, 1988.

\bibitem{happel2}
\bysame, \emph{Auslander-{R}eiten triangles in derived categories of
  finite-dimensional algebras}, Proc. Amer. Math. Soc. \textbf{112} (1991),
  no.~3, 641--648.

\bibitem{happkellrei}
D.~Happel, B.~Keller, and I.~Reiten, \emph{Bounded derived categories and
  repetitive algebras}, J. Algebra \textbf{319} (2008), no.~4, 1611--1635.

\bibitem{hughes-wash}
D.~Hughes and {J.} Waschb{\"u}sch, \emph{Trivial extensions of tilted
  algebras}, Proc. London Math. Soc. \textbf{46} (1983), 347--364.

\bibitem{maclane}
S.~Mac~Lane, \emph{Homology}, Die Grundlehren der mathematischen
  Wissenschaften, Bd. 114, Academic Press, Inc., Publishers, New York;
  Springer-Verlag, Berlin-G\"{o}ttingen-Heidelberg, 1963.

\bibitem{quillen}
D.~Quillen, \emph{Higher algebraic {K}-theory {I}}, Lecture Notes in
  Mathematics, no. 341, Springer-Verlag, 1973.

\bibitem{ribeiro-fernandes-giraldo}
E.~Ribeiro~Alvares, S.~M. Fernandes, and H.~Giraldo, \emph{Shapes of
  {A}uslander-{R}eiten triangles}, Under revisions. Available in
  \href{https://arxiv.org/abs/1610.08457}{https://arxiv.org/abs/1610.08457}.

\bibitem{ringel2}
C.~M. Ringel, \emph{Tame algebras and integral quadratic forms}, Lecture Notes
  in Mathematics, no. 1099, Springer-Verlag, 1984.

\bibitem{schroer}
J.~Schr{\"o}er, \emph{{O}n the quiver with relations of a repetitive algebra},
  Arch. Math. \textbf{72} (1999), no.~6, 426--432.

\bibitem{verdier}
J.~Verdier, \emph{Cat{\'e}gories d{\'e}riv{\'e}es, {\'e}tat 0}, Cohomologie
  Etale: {S}{\'e}minaire de {G}{\'e}om{\'e}trie {A}lg{\'e}brique du
  {B}ois-{M}arie {SGA} 4 1/2 (P.~Deligne, ed.), Lecture Notes in Mathematics,
  no. 569, Springer-Verlag, 1977, pp.~262--311.

\bibitem{wald}
B.~Wald and J.~{W}aschb{\"u}sch, \emph{Tame biserial algebras}, J. Algebra
  \textbf{95} (1985), 480--500.

\end{thebibliography}

\end{document}